\documentclass[12pt]{article}
\usepackage{amsmath,amsthm,amssymb}
\usepackage{xcolor}
\RequirePackage[colorlinks,citecolor=teal,urlcolor=blue]{hyperref}
\usepackage[utf8]{inputenc}
\usepackage[T2A]{fontenc}
\usepackage[main=english,ukrainian]{babel}
\usepackage{comment}
\usepackage{graphicx}
\usepackage{tikz}
\usepackage{cite}

\newcommand\harmonic{{\cal H}}
\newcommand\tree{{\cal T}}
\newcommand{\stkout}[1]{\ifmmode\text{\sout{\ensuremath{#1}}}\else\sout{#1}\fi}
\allowdisplaybreaks
\newcommand\ftz{\footnotesize}
\newcommand\frct [2]
{\ensuremath{\raisebox{.3ex}{\ftz #1}\!/\!\raisebox{-0.3ex}{\ftz#2}}}

\newcommand\normal{{\cal N}}

\newcommand\Poi{{\rm Poi}}
\newcommand\Ber{{\rm Ber}}

\newcommand\Dirichlet{{\rm Dirichlet}}
\newcommand\eqlaw{\buildrel {\cal L} \over = }

\newcommand\convD{{\buildrel {\cal L} \over \longrightarrow}}
\newcommand\given{\, \vert \, }
\newcommand\E{{\mathbb E}}
\newcommand\V{{\mathbb V{\rm ar}}}
\newcommand\prob{{\mathbb P}}
\newcommand\Cov{{\mathbb C{\rm ov}}}

\newcommand{\R}{\mathbb{R}}
\newcommand{\N}{{\mathbb N}}

\newcommand{\op}{\mathrm{op}}
\newcommand{\eins}{\mathrm{1}}
\newcommand{\Var}{\V}
\newcommand{\Dir}{\mathrm{Dir}}
\newcommand{\Beta}{\mathrm{Beta}}

\newtheorem{theorem}{Theorem}[section]
\newtheorem{corollary}{Corollary}[section]
\newtheorem{lemma}{Lemma}[section]
\newtheorem{proposition}{Proposition}[section]
\newtheorem{remark}{Remark}[section]

\newcommand\almostsure{\buildrel a.s. \over \longrightarrow}
\newcommand\inprob{\buildrel {\mathbb P} \over \longrightarrow}
\newcommand\convLaw{\buildrel {\cal L} \over \longrightarrow }
\newcommand\field{{\mathbb F}}
\begin{document}
\begin{center}
{\huge \bf Distances in $m$-ary recursive trees}

\bigskip
{\large Kiran Bhutani\footnote{Department of Mathematics and Statistics, The Catholic University of America, Washington, D.C. 20064, U.S.A.; Email: \url{bhutani@cua.edu} } \qquad Ravi Kalpathy\footnote{Department of Mathematics and Statistics, The Catholic University of America, Washington, D.C. 20064, U.S.A.; Email: \url{kalpathy@cua.edu} }
 \qquad Florian Lesny\footnote{Institute for Mathematics, J.W.~Goethe University, Frankfurt a.M., Germany; Email: \url{lesny@math.uni-frankfurt.de} }
\qquad Hosam Mahmoud\footnote{Department of Statistics, The George Washington University, Washington, D.C. 20052, U.S.A.; Email: \url{hosam@gwu.edu}}
 \qquad Ralph Neininger\footnote{Institute for Mathematics, J.W.~Goethe University, Frankfurt a.M., Germany; Email: \url{neininger@math.uni-frankfurt.de} }}

\medskip
\today
\end{center}
\bigskip\noindent
\section*{Abstract}
We introduce the $m$-ary recursive tree, a tree structure that starts with a single node. At each step of the growth process, a random node, called the recruiter, is selected, and $m$ new nodes are attached to this recruiter.

The main objective is an asymptotic analysis of distances in this structure. We first derive the distribution of the depth of nodes in such a tree as a convolution of independent (though not identically distributed) Bernoulli random variables. This representation yields normal and Poisson approximations with quantified errors 
for the depth. 
We also investigate the height of these trees, that is, the longest root-to-leaf distance, via a coupling to a weighted random recursive tree with suitably 
chosen weights. The coupling transfers the results on a strong law and tightness from the weighted
recursive tree to the $m$-ary recursive tree.

Furthermore, we establish a bivariate limit law for the Wiener index, that is, the sum of all pairwise distances, and the internal path length, that is, the sum of the depths of all nodes, using the contraction method. As a novel technical feature of our approach, we employ weighted norms instead of Euclidean norms within the contraction method.

\bigskip

\noindent{\bf AMS subject classifications:} 
Primary:  05C82, 
               05C12, 
               90B15; 
    Secondary: 60C05, 
 60F05. 

\medskip\noindent
{\bf Keywords:} Tree, network, 
distance in graph, Wiener index, central limit theorem,
Poisson approximation, branching process, contraction method, fixed point. 
\section{Introduction} 
The $m$-ary recursive tree grows according to an algorithm that adds $m$  nodes at each step.
The tree starts out with a single root node labeled 1. At time $n\ge 1$, an existing
node in the tree is selected uniformly at random as a {\em parent} for $m$ incoming nodes. 
The~$m$ nodes are attached as children
to the chosen parent and all receive the label $n+1$; the process is then repeated. One can think 
of choosing a parent for children as {\em recruiting}, and the selected parent is therefore a {\em recruiter}.

The case $m=1$ is the well-studied {\em standard} recursive tree~\cite{Drmota,Frieze,Hofri,Smythe}. 
Figure~\ref{Fig:rec} illustrates two steps of growth of a ternary ($m=3$) recursive tree.
While the first two trees are deterministic (occur with probability 1), the third tree 
(the rightmost in the figure) occurs with probability $\frct 1 4$. 

\bigskip
\begin{figure}[thb]
\begin{center}
\begin{tikzpicture}[scale=0.6]


\draw [ultra thick,fill=white] (13,8) circle [radius=0.45];
\node at (13, 8.9) {$\star$};
\node at (13, 8) {$1$};

\node at (19.5, 6.9) {$\star$};
\coordinate (A) at (18,8);
\coordinate (B) at (16.5,6);
\coordinate (C) at (18,6);
\coordinate (D) at (19.5,6);

\draw [ultra thick] (A)--(B);
\draw [ultra thick] (A)--(C);
\draw [ultra thick] (A)--(D);

\draw [ultra thick,fill=white] (18,8) circle [radius=0.45];
\draw [ultra thick,fill=white] (16.5,6) circle [radius=0.45];
\draw [ultra thick,fill=white] (18,6) circle [radius=0.45];
\draw [ultra thick,fill=white] (19.5,6) circle [radius=0.45];

\node at (18, 8) {$1$};
\node at (16.5, 6) {$2$};
\node  at (18, 6) {$2$};
\node at (19.5, 6) {$2$};


\coordinate (A) at (23,8);
\coordinate (B) at (21.5,6);
\coordinate (C) at (23,6);
\coordinate (D) at (24.5,6);
\coordinate (E) at (23.3,4);
\coordinate (F) at (24.5,4);
\coordinate (G) at (25.7,4);
\coordinate (H) at (22.1,4);
\coordinate (I) at (26.9,4);

\draw [ultra thick,fill=white] (18,8) circle [radius=0.45];
\draw [ultra thick,fill=white] (16.5,6) circle [radius=0.45];
\draw [ultra thick,fill=white] (18,6) circle [radius=0.45];
\draw [ultra thick,fill=white] (19.5,6) circle [radius=0.45];

\node at (18, 8) {$1$};
\node at (16.5, 6) {$2$};
\node  at (18, 6) {$2$};
\node at (19.5, 6) {$2$};

\draw [ultra thick] (A)--(B);
\draw [ultra thick] (A)--(C);
\draw [ultra thick] (A)--(D);
\draw [ultra thick] (D)--(E);
\draw [ultra thick] (D)--(F);
\draw [ultra thick] (D)--(G);

\draw [ultra thick,fill=white] (23,8) circle [radius=0.45];
\draw [ultra thick,fill=white] (21.5,6) circle [radius=0.45];
\draw [ultra thick,fill=white] (23,6) circle [radius=0.45];
\draw [ultra thick,fill=white] (24.5,6) circle [radius=0.45];

\draw [ultra thick,fill=white] (23.3,4) circle [radius=0.45];

\draw [ultra thick,fill=white] (24.5,4) circle [radius=0.45];
\draw [ultra thick,fill=white] (25.7,4) circle [radius=0.45];

\node at (23, 8) {$1$};
\node at (21.5, 6) {$2$};
\node  at (23, 6) {$2$};
\node at (24.5, 6) {$2$};

\node at (23.3, 4) {$3$};
\node  at (24.5, 4) {$3$};
\node at (25.7, 4) {$3$};

\end{tikzpicture}
\end{center}
  \caption{Two steps of growth of a ternary recursive tree. The recruiting nodes are flagged 
  with asterisks.}
  \label{Fig:rec}
\end{figure}

Recursive trees offer a powerful tool in various modern applications. 
Allowing multiple additions at each step widens the scope of application of recursive trees. For example, in the context of chain letters~\cite{Gastwirth}, the $m$-ary option allows a letter holder to sell or distribute 
multiple copies of the letter simultaneously to a single or multiple new owners. 
The structure of the $m$-ary recursive tree may suggest an accelerated Union-Find algorithm, a point we shall return to in the future. 
\section{Scope of the research}
The depth of a node in a rooted tree is the length of the path (measured in edges) joining the node to the root.  The depth of a node in a standard recursive tree ($m=1$) and in
a few other variants is shown to be distributed like a convolution of independent Bernoulli random 
variables~\cite{Dobrow,Lyon1,Lyon2,Nakata}. 

In this paper, we show that this useful property carries over to $m$-ary recursive trees and use it advantageously to find  central limit theorems and Poisson approximations (both with rates of convergence) for the depth.

We study the height via a coupling to a weighted random recursive tree~\cite{Pain1,Pain2} with suitably chosen weights. The coupling transfers the results on a strong law and tightness from the weighted
recursive tree to $m$-ary recursive tree.

The sum of the depths of all the nodes, called the internal path length,
and the inter-distances between all pairs of nodes, called the Wiener index,
are two significant topological indices. 
We find their exact mean values; our main contribution is a characterization of their joint bivariate 
limit distribution as the fixed-point solution to a distributional equation. A novel aspect of
the approach is the use of weighted norms in the contraction method instead of the more standard Euclidean norms.

The papers~\cite{Curien,Mailler,Ward} and the references therein analyze models that are different but quite related to our work.
The families of graphs considered by these authors grow by gluing an additional cluster on the tree at each epoch, instead of recruiting a vertex 
(as in the standard recursive tree) or a bunch 
of disconnected vertices (as we do here). 
\begin{remark}
According to the construction algorithm,  in an m-ary recursive tree,
the labels on a root-to-leaf path form an increasing sequence of numbers.
Some authors call such trees ``increasing,'' see~\cite{Bergeron}. The
$m$-ary recursive tree  we propose adds another species to the class of increasing trees.  
\end{remark}
For integer $r\ge 1$ and real $y>0$, in the sequel  
we use the notation 
$$\harmonic_n^{(r)} (y) = \sum_{i=1}^n \frac 1 {(i+y)^r}$$
for the $n$th generalized harmonic number of order $r$.
It is well known that, as $n\to\infty$, we have the asymptotic approximations 
$${\cal H}_n^{(1)}(y) \sim \ln (n), \qquad {\cal H}_n^{(2)}(y) \sim \Psi (1, 1+y),$$
where $\Psi(1, 1+y)$ is the trigamma function~\cite[pp.~260--263]{Stegun}.

In what follows we use\, $\eqlaw$\, for exact equality in law, and $\convD$ for convergence 
in law (distribution). The symbols $\inprob$ and $\almostsure$ respectively stand for convergence in probability
and convergence almost surely.
\section{Distances in an $m$--ary recursive tree}
We are concerned with several types of distances. We define and discuss them in this section and give an overview of their significance. 

The {\em depth}, $D_n$, of a node labeled with $n$ in an $m$-ary recursive 
tree is its edge distance to the root. 
For instance, in the tree in Figure~\ref{Fig:rec} the root is at depth 0,  all 
the nodes labeled with 2 are at depth 1 and all the nodes labeled with 3 are at depth 2.

The depth of a randomly chosen node has been addressed in a more general model by \cite{Mailler} (see Remark \ref{rem_mailler}). This is a notion of depth different from $D_n$.
The following representation of $D_n$ as a Bernoulli convolution
is a generalization of theorems in~\cite{Dobrow,Nakata}.

While the depth is a local property of a node, several other distances are defined by the overall structure of the tree. As such, they are measures of global performance. We focus on the internal path length, the height and the Wiener index.

The {\em internal path length}, $P_n$, is the sum of all node distances from the root. The three trees in  Figure~\ref{Fig:rec}, from left to right, have internal path lengths 0, 3, and 9, respectively.

The {\em Wiener index}, $W_n$, is the sum of all inter-node distances. The three trees in  Figure~\ref{Fig:rec}, from left to right, have Wiener indices 0, 9, and 42, respectively.

The {\em height}, $H_n$, is the longest root-to-leaf distance in an $m$-ary recursive tree,
when the highest label in it is $n$. 
For instance, the rightmost tree in~Figure~\ref{Fig:rec} is of height 2.
The height of the standard recursive tree represents the 
worst-case behavior for some algorithms,
such as Union-Find~\cite{Devroye}.

Many of the random variables in this manuscript depend on~$m$. 
For instance, the depth of the nodes labeled $n$
may appropriately be called~$D_{m,n}$. Note that all nodes labeled $n$ are at the same depth (same distance from the root). For lighter notation, we leave $m$ out of all these variables, as $m$ is held fixed anyway. The presence of $m$ in the notation should be implicitly understood.

Let $\tau_n$ be the number of nodes in the tree,  when the highest label in it is~$n$. We have
$$\tau_n = \tau_{n-1} + m = (n-1)m + 1.$$ 
\section{The depth as a Bernoulli convolution}
We denote a Bernoulli random variable with parameter $p$ by Ber($p$).
Recall that
$\mbox{Ber} (p)$
has the
moment generating function
$ 1-p + pe^t$.
\begin{theorem}
\label{Thm:Bernoulli}
Let $D_n$ be the depth of a node labeled with $n\ge 2$ in an $m$-ary recursive 
tree.\footnote{\label{foot_6}Interpret an empty sum
as 0. Specifically,
when $n=2$, we take the sum of the Bernoulli random variables to be 0.} 
We then have
$$D_n  \eqlaw 1 + \Ber \Big( \frac m {m+1}\Big) +  \Ber \Big( \frac m {2m+1}\Big) +
\cdots + \Ber \Big( \frac m {m(n-2)+1}\Big),$$
for independent Bernoulli random variables in the convolution.
\end{theorem}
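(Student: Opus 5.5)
We would prove the statement by a direct recursion on the distribution of $D_n$, using the fact that the parent of the nodes labeled $n$ is chosen uniformly among the $\tau_{n-1}=(n-2)m+1$ nodes present at time $n-1$. Writing $D_n = 1 + \widetilde D_{n-1}$, where $\widetilde D_{n-1}$ is the depth of a uniformly chosen node of the tree with highest label $n-1$, the key step is to identify the law of $\widetilde D_{n-1}$ and show it equals
$$\Ber\Big(\frac m{m+1}\Big)+\cdots+\Ber\Big(\frac m{m(n-2)+1}\Big).$$
For $n=2$ the parent is the root and $D_2=1$, matching the empty sum in the footnote.

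The plan is to condition on the label of the recruiter. The root (label $1$) is chosen with probability $1/\tau_{n-1}$. A node with label $j\in\{2,\dots,n-1\}$ is chosen with probability $m/\tau_{n-1}$, in which case the new depth is $1+D_j$. This gives the recursion
$$\E\big[e^{tD_n}\big] = e^t\Big(\frac{1}{\tau_{n-1}} + \frac{m}{\tau_{n-1}}\sum_{j=2}^{n-1}\E\big[e^{tD_j}\big]\Big).$$
Set $\phi_n(t)=\E[e^{tD_n}]$ and $S_n=\frac1m+\sum_{j=2}^{n}\phi_j$, so that $\phi_n = e^t m S_{n-1}/\tau_{n-1}$. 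Then
$$S_n = S_{n-1}+\phi_n = S_{n-1}\Big(1+\frac{m e^t}{\tau_{n-1}}\Big) = S_{n-1}\,\frac{(n-2)m+1+me^t}{(n-2)m+1}.$$
Iterating down to $S_1=1/m$ gives a product formula for $S_{n-1}$. Hence
$$\phi_n = e^t\prod_{k=1}^{n-2}\frac{(k-1)m+1+me^t}{km+1}.$$

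The final step is to recognize each factor as a Bernoulli moment generating function. Since $km+1 = (k-1)m+1+m$, the $k$-th factor is
$$\frac{(k-1)m+1+me^t}{km+1} = 1-\frac{m}{km+1}+\frac{m}{km+1}e^t,$$
the mgf of $\Ber(m/(km+1))$. Thus $\phi_n(t)=e^t\prod_{k=1}^{n-2}\big(1-p_k+p_ke^t\big)$ with $p_k=m/(km+1)$. This is the mgf of the claimed independent convolution shifted by $1$, and since these mgfs are finite for all $t$, uniqueness of mgfs gives the equality in law.

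The main obstacle is the bookkeeping in the telescoping. One must account for the root separately, because it carries weight $1$ rather than $m$; this is why we include the term $1/m$ in $S_n$. One must also check the index alignment, so that the factors run over $k=1,\dots,n-2$ and the $n=2$ case yields $\phi_2=e^t$. An alternative is induction on $n$ with a coupling, but the generating-function telescoping is the cleanest route.
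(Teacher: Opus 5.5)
Your proposal is correct and follows essentially the same route as the paper. Both condition on the recruiter's label to get $\phi_n=\frac{e^t}{\tau_{n-1}}\big(1+m\sum_{j=2}^{n-1}\phi_j\big)$, telescope this into a product, and recognize each factor as a Bernoulli moment generating function. The only difference is cosmetic: you telescope the partial sums $S_n$, with the root's weight absorbed as $1/m$, whereas the paper differences $\tau_{n-1}\phi_n-\tau_{n-2}\phi_{n-1}$ to get $\phi_n=\frac{\tau_{n-2}+me^t}{\tau_{n-1}}\phi_{n-1}$ for $n\ge 3$ and starts from $\phi_2=e^t$.
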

\begin{proof}
We develop a recurrence for the moment generating function
 $\phi_n(t) = \E[e^{D_nt}]$.
Let $\field_n$ be the sigma field induced by the tree, when the nodes labeled with~$n$ join.
(The tree at this time has experienced  $n-1$ 
insertions.) 

Let ${\cal A}_{n,k}$ be the event that node $k <n$ is the recruiter of the group of nodes labeled $n$
and write
$$
\E[ e^{D_n t} \given \field_{n-1}]  
   = \sum_{k=1}^{n-1}e^{(D_k+1)  t} \, 
        \prob ({\cal A}_{n,k}).$$
 With $D_1 = 0$, we isolate the first term to proceed with
$$
\E[ e^{D_n t} \given \field_{n-1}]  
   = \frac {e^t} {\tau_{n-1}}  + e^t \sum_{k=2}^{n-1}e^{t D_k}\,\frac m {\tau_{n-1}}, \qquad \mbox{for \ } n \ge 2.
$$
To get the unconditional  
moment generating function, 
we take a double expectation of both sides to obtain 
$$\phi_n(t)   =  \frac {e^t} {\tau_{n-1}} + \frac {me^t} {\tau_{n-1}}\sum_{k=2}^{n-1} \phi_k(t) .$$

\bigskip
The recurrence has a telescopic sum, suggesting the differencing technique 
$$ \tau_{n-1}\phi_n(t)  - \tau_{n-2} \phi_{n-1}(t)  
     = m e^t \, \phi_{n-1}(t),$$
valid for $n\ge 3$.
Reorganizing, we obtain
$$\phi_n(t)  = \Big(\frac{\tau_{n-2}
     + m e^t}{\tau_{n-1}}\Big)\phi_{n-1}(t) =  \Big(\frac{\tau_{n-1} - m
     + m e^t}{\tau_{n-1}}\Big)\phi_{n-1}(t).$$
Unwinding, we get
$$\phi_n(t)  = \phi_2 (t) \prod_{k=2}^{n-1}\Bigl(1 - \frac m {\tau_k}
       + \frac m {\tau_k}\, e^t\Bigr).$$
The $k$th term in this product is the moment generating function of 
a Bernoulli random variable with parameter $m/\tau_k$.       
We have the initial condition $\phi_2 (t)=e^t$.
So, what is left is the moment generating function of a convolution
of independent Bernoulli random variables:
$$D_n  \eqlaw 1 + \Ber \Big( \frac m {\tau_2}\Big) +  \Ber \Big( \frac m {\tau_3}\Big) +
\cdots + \Ber \Big( \frac m {\tau_{n-1}}\Big),$$
for $n\ge 3$. The result extends to $n=2$, in 
light of the interpretation in footnote \ref{foot_6}.
\end{proof}
\begin{remark}\label{rem_mailler}
In  \cite{Mailler}, a more general model called a weighted random recursive tree (WRRT) is studied. Among the many results obtained, the depth of nodes is addressed. 
Given a positive sequence of weights $(w_i)_{i \ge 1}$, the tree starts with a root node of weight $w_1$. At each subsequent step, a new node is attached to one of the existing nodes, chosen at random with probability proportional to their weights and obtains the corresponding weight of $(w_i)_{i \ge 1}$. More details can be found in \cite{Mailler}. 

In \cite{Mailler}, the depth of a node  chosen  randomly with probabilities proportional to the weights in the WRRT is studied. 
The choice $w_1=1$ and $w_i=m$, for $i \ge 2$, 
relates to the present model of the random $m$-ary recursive tree and yields the depth of a uniformly chosen node in the random $m$-ary recursive tree. 
In particular, \cite[Corollary 8]{Mailler} specialized to the case of a uniformly chosen node in the random $m$-ary recursive trees shows that its depth has the representation 
    \begin{align*}
\Ber \Big( \frac m {m+1}\Big) +  \Ber \Big( \frac m {2m+1}\Big) +
\cdots + \Ber \Big( \frac m {(n-1)m+1}\Big)
    \end{align*}
    for independent Bernoulli random variables in the convolution.
\end{remark} 
\begin{corollary}
\label{Cor:avevar}
For $n\ge 2$, we have
\begin{align*}
\E[D_n] &= 1 + \harmonic_{n-2}^{(1)}\Big(\frac 1 m\Big) \sim \ln (n), \qquad \mbox{as \ }n\to\infty;\\
\V[D_n] &=  \harmonic_{n-2}^{(1)}\Big(\frac 1 m\Big) -  \harmonic_{n-2}^{(2)}\Big(\frac 1 m\Big)
      \sim \ln ( n ), \qquad  \mbox{as \ }n\to\infty.
\end{align*}
\end{corollary}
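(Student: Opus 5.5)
The plan is to read off the mean and variance directly from the Bernoulli convolution in Theorem~\ref{Thm:Bernoulli}, and then handle the asymptotics separately. For $n\ge 2$ we have $D_n \eqlaw 1 + \sum_{k=2}^{n-1} B_k$ with independent $B_k \sim \Ber(p_k)$, where $p_k = m/\tau_k = m/(m(k-1)+1)$. Reindex with $i=k-1$, so $i$ runs from $1$ to $n-2$. This gives
$$p_{i+1} = \frac{m}{mi+1} = \frac{1}{i+1/m},$$
which is exactly the summand of $\harmonic^{(1)}_{n-2}(1/m)$.

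For the mean, linearity gives $\E[D_n] = 1 + \sum_{i=1}^{n-2} 1/(i+1/m) = 1+\harmonic^{(1)}_{n-2}(1/m)$. For the variance, independence lets the variances add, and the constant $1$ contributes nothing. Each term is $\V[B]=p(1-p)=p-p^2$, and $p^2 = 1/(i+1/m)^2$. Summing yields $\harmonic^{(1)}_{n-2}(1/m)-\harmonic^{(2)}_{n-2}(1/m)$. The case $n=2$ is consistent with the empty-sum convention: both harmonic numbers vanish, so $D_2 = 1$ with variance $0$.

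For the asymptotics, I would use the facts recalled earlier: $\harmonic^{(1)}_n(y)\sim\ln n$ and $\harmonic^{(2)}_n(y)\to\Psi(1,1+y)<\infty$. To justify the first, one can sandwich $\sum_{i=1}^{n-2} 1/(i+y)$ between $\int_1^{n-1} dx/(x+y)$ and $1/(1+y)+\int_1^{n-2}dx/(x+y)$. Both bounds are $\ln n + O(1)$. The second holds because $\sum 1/(i+y)^2$ converges. Hence $\E[D_n] = \ln n + O(1)\sim \ln n$, and since the subtracted term is bounded, $\V[D_n] = \ln n + O(1)\sim\ln n$. Replacing $n-2$ by $n$ inside the logarithm costs only $o(1)$.

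There is no real obstacle here. The only point needing care is the index shift $k\mapsto i=k-1$ and checking that $m/(m(k-1)+1)$ matches $1/(i+1/m)$ exactly, so that the harmonic numbers carry the correct shift $y=1/m$ and upper limit $n-2$.
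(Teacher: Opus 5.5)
Your proposal is correct and follows essentially the same route as the paper, which states this corollary without a separate proof as an immediate consequence of Theorem~\ref{Thm:Bernoulli}: sum the means $p = 1/(i+1/m)$ and the variances $p-p^2$ of the independent Bernoulli summands. Your integral sandwich showing $\harmonic^{(1)}_{n-2}(1/m)=\ln n+O(1)$, together with the boundedness of $\harmonic^{(2)}_{n-2}(1/m)$, simply makes explicit the standard asymptotics that the paper quotes.
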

\begin{corollary}
\label{Cor:weaklaw}
$$\frac {D_n}{\ln (n)} \inprob 1, \qquad  \mbox{as \ }n\to\infty.$$
\end{corollary}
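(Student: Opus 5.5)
The plan is a second-moment argument resting on Corollary~\ref{Cor:avevar}. By that corollary, $\E[D_n] = 1 + \harmonic_{n-2}^{(1)}(1/m) \sim \ln(n)$ and $\V[D_n] \le \harmonic_{n-2}^{(1)}(1/m) \sim \ln(n)$. The second-order harmonic term is bounded and nonnegative, so dropping it only enlarges the bound. We will write
$$\frac{D_n}{\ln(n)} - 1 = \frac{D_n - \E[D_n]}{\ln(n)} + \Big(\frac{\E[D_n]}{\ln(n)} - 1\Big)$$
and handle the two terms separately.

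The second, deterministic term tends to $0$ directly by the asymptotic $\E[D_n]\sim\ln(n)$ in Corollary~\ref{Cor:avevar}. For the first term, fix $\varepsilon>0$ and apply Chebyshev's inequality:
$$\prob\big(|D_n - \E[D_n]| > \varepsilon \ln(n)\big) \le \frac{\V[D_n]}{\varepsilon^2 \ln^2(n)} = O\Big(\frac{1}{\varepsilon^2\ln(n)}\Big) \to 0 .$$
Hence $(D_n-\E[D_n])/\ln(n) \inprob 0$. Adding the deterministic term, which converges to $0$, gives the claim, since convergence in probability is preserved under adding a vanishing deterministic sequence.

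No step here is a real obstacle. The only point that needs care is confirming that $\harmonic_{n-2}^{(1)}(1/m)\sim\ln(n)$ for fixed $m$, which Corollary~\ref{Cor:avevar} already provides. Everything else is the standard fact that a variance of order $\ln(n)$ is negligible against the squared scale $\ln^2(n)$.
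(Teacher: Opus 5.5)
Your proposal is correct and is essentially the paper's route. The paper gives no written proof and presents the corollary as an immediate consequence of the mean and variance asymptotics in Corollary~\ref{Cor:avevar}, and your Chebyshev bound $\V[D_n]/(\varepsilon^2\ln^2(n)) = O(1/\ln(n))$ is exactly that intended deduction.
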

In the sequel, we need an expansion of the expectation $\mathbb{E}[P_n]$, 
which we obtain from the expected depths of the nodes. 
\begin{corollary} 
\label{Cor:IPL}
    Let $P_n = m\sum_{k=1}^n D_k$ be the internal path length when the highest
    index in the tree is $n$. We have 
\begin{align*} 
    \gamma_n:=\mathbb{E}[P_n]&=(m(n-1)+1)\mathcal{H}_{n-1}^{(1)}\Big(\frac{1}{m}\Big)\\
    &= mn\ln(n)-m\psi\Big(\frac{m+1}{m}\Big)n+o(n), 
    \end{align*} 
    where $\psi(x):=\frac{d}{dx}(\ln\Gamma(x))$ is the digamma function. 
\end{corollary}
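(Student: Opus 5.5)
The plan is to compute $\E[P_n]$ term by term from Corollary~\ref{Cor:avevar}, to verify the closed form by induction on $n$, and then to read off the asymptotics from the digamma representation of generalized harmonic numbers. Throughout write $y=1/m$ and $\harmonic_j=\harmonic_j^{(1)}(y)$, with the convention $\harmonic_0=0$.

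\emph{Exact formula.} By linearity, $\gamma_n=m\sum_{k=1}^n \E[D_k]$. Here $D_1=0$, and Corollary~\ref{Cor:avevar} gives $\E[D_k]=1+\harmonic_{k-2}$ for $k\ge 2$. Hence $\gamma_1=0$, and for $n\ge 2$
\[
\gamma_n-\gamma_{n-1}=m\bigl(1+\harmonic_{n-2}\bigr).
\]
I will show that $R_n:=(m(n-1)+1)\harmonic_{n-1}$ satisfies the same recurrence with the same initial value. First, $R_1=1\cdot\harmonic_0=0$. Next, split off the last term of $\harmonic_{n-1}$:
\[
R_n-R_{n-1}=\bigl(m(n-1)+1\bigr)\harmonic_{n-1}-\bigl(m(n-2)+1\bigr)\harmonic_{n-2}
= m\,\harmonic_{n-2}+\frac{m(n-1)+1}{n-1+1/m}.
\]
The last fraction equals $m$, so $R_n-R_{n-1}=m(1+\harmonic_{n-2})$. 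Induction on $n$ then gives $\gamma_n=R_n$ for all $n\ge 1$.

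\emph{Asymptotics.} Use the standard identity $\harmonic_{n-1}^{(1)}(y)=\psi(n+y)-\psi(1+y)$. Combined with $\psi(x)=\ln x+O(1/x)$, this yields
\[
\harmonic_{n-1}=\ln n-\psi\Big(\frac{m+1}{m}\Big)+O(1/n).
\]
Multiplying by $mn-m+1$ gives
\[
mn\ln n-m\psi\Big(\frac{m+1}{m}\Big)n+O(\ln n),
\]
and $O(\ln n)$ is $o(n)$.

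Both steps are routine. The only point needing care is the telescoping computation in the exact formula, specifically the cancellation $\bigl(m(n-1)+1\bigr)/\bigl(n-1+1/m\bigr)=m$, together with the correct indexing of $\harmonic_{n-2}$ against $\harmonic_{n-1}$.
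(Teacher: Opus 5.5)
Your proof is correct and follows essentially the same route as the paper. Both arguments use three steps: linearity of expectation applied to the expected depths from the Bernoulli representation, an exact evaluation of the resulting sum, and the expansion $\mathcal{H}_{n-1}^{(1)}(y)=\ln(n)-\psi(1+y)+O(1/n)$ for the asymptotics.

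The only difference is cosmetic. The paper interchanges the order of summation and evaluates $m\sum_{i=3}^{n}(n+1-i)\,\frac{m}{m(i-2)+1}$ directly, leaving the algebra to the reader. You instead verify the stated closed form through its first differences, using the cancellation $\frac{m(n-1)+1}{n-1+1/m}=m$ and the base case $\gamma_1=R_1=0$; this is a clean way of supplying the computation the paper omits.
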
 
\begin{proof}
  Using the representation of the depth from Theorem \ref{Thm:Bernoulli}, we obtain
    \begin{align*} 
        \mathbb{E}[P_n]&=m(n-1)+m\sum_{i=3}^{n} (n+1-i)
                    \E\Big[\Ber\Big(\frac{m}{m(i-2)+1}\Big)\Big]\\
        &=m(n-1)+m\sum_{i=3}^{n} \frac{m(n+1-i)}{m(i-2)+1}. 
    \end{align*} 
    Computing the second term in the latter display yields
    \begin{align*} 
        m\sum_{i=3}^{n} \frac{m(n+1-i)}{m(i-2)+1}= \big(m(n-1)+1\big)\mathcal{H}_{n-1}^{(1)}
        \Big(\frac{1}{m}\Big)-m(n-1). 
    \end{align*} 
    The asymptotics of $\mathcal{H}_{n}^{(1)}(y)$ are given by 
    $\ln(n)-\psi(1+y)+O(\frac{1}{n})$, which implies
    \begin{align*} 
        \mathbb{E}[P_n]=mn\ln(n)-m\psi\Big(\frac{m+1}{m}\Big)n+o(n).
    \end{align*} 
\end{proof}
\begin{remark}
Bernoulli convolutions of the depth are reported in~\cite{Dobrow,Mailler,Lyon1,Lyon2,Nakata} in a variety of recursive tree structures that are different from the $m$-ary model considered here. Some 
alternative models assume general weights~\cite{Mailler,Pain1,Pain2}, also called affinities~\cite{Nakata}.
\end{remark}
\subsection{A Gaussian law for the depth}
In this subsection, we discuss the normality of $D_n$.
\begin{theorem}
\label{Thm:CLT}
For an $m$-ary recursive tree with highest label $n$, 
we have the convergence
$$ \frac{D_n - \ln (n)} 
      {\sqrt {\ln (n)}} \ \convD \ \normal (0,1).$$
The convergence occurs at the rate of $1/\sqrt{ \ln (n)}$.      
\end{theorem}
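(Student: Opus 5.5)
We use the representation of Theorem~\ref{Thm:Bernoulli}: $D_n - 1$ is a sum of independent Bernoulli random variables $B_k \sim \Ber(p_k)$, $p_k = m/(mk+1)$, $k=1,\dots,n-2$. The plan is to apply the Berry--Esseen theorem for independent, non-identically distributed summands, and then replace the exact centering and scaling by $\ln(n)$.

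\emph{Step 1 (Berry--Esseen for the exact standardization).} Set $\mu_n = \E[D_n]$ and $\sigma_n^2 = \V[D_n]$, as computed in Corollary~\ref{Cor:avevar}. Since $|B_k - p_k| \le 1$, we have
$$\E|B_k-p_k|^3 \le \E(B_k-p_k)^2 = p_k(1-p_k).$$
Hence the Lyapunov ratio satisfies
$$\frac{\sum_k \E|B_k-p_k|^3}{\sigma_n^3} \le \frac{1}{\sigma_n}.$$
The Berry--Esseen inequality then gives
$$\sup_x \Big|\prob\big((D_n-\mu_n)/\sigma_n \le x\big) - \Phi(x)\Big| \le \frac{C}{\sigma_n}.$$
By Corollary~\ref{Cor:avevar}, $\sigma_n^2 = \harmonic^{(1)}_{n-2}(1/m) - \harmonic^{(2)}_{n-2}(1/m) = \ln(n) + O(1)$, because $\harmonic^{(2)}$ converges. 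So this distance is $O(1/\sqrt{\ln(n)})$.

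\emph{Step 2 (change of normalization).} Write
$$\frac{D_n-\ln(n)}{\sqrt{\ln(n)}} = a_n \,\frac{D_n-\mu_n}{\sigma_n} + b_n,$$
where $a_n = \sigma_n/\sqrt{\ln(n)}$ and $b_n = (\mu_n - \ln(n))/\sqrt{\ln(n)}$. Using $\harmonic^{(1)}_n(y) = \ln(n) - \psi(1+y) + O(1/n)$ (as in the proof of Corollary~\ref{Cor:IPL}), we get $\mu_n - \ln(n) = O(1)$ and $\sigma_n^2 - \ln(n) = O(1)$. Therefore $a_n = 1 + O(1/\ln(n))$ and $b_n = O(1/\sqrt{\ln(n)})$.

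Now bound the Kolmogorov distance:
$$\sup_x \Big|\prob\Big(\tfrac{D_n-\ln(n)}{\sqrt{\ln(n)}} \le x\Big) - \Phi(x)\Big| \le \frac{C}{\sigma_n} + \sup_x \big|\Phi((x-b_n)/a_n) - \Phi(x)\big|.$$
A mean value argument on the Gaussian distribution function handles the second term. It is bounded by $O(|b_n|) + O(|a_n-1|)$, using that $\sup_x |x|\varphi(x)$ is finite to control the scale change. This gives the total rate $O(1/\sqrt{\ln(n)})$, and hence also the convergence in law.

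\emph{Main obstacle.} The Berry--Esseen step is mechanical once the third-moment bound is in place. The most care is needed in Step 2: the centering correction is of exact order $1/\sqrt{\ln(n)}$, because $\mu_n - \ln(n) \to 1 - \psi(1+1/m)$, a nonzero constant. So the replacement of $\mu_n, \sigma_n$ by $\ln(n)$ does not worsen the rate, but it also cannot be improved beyond $1/\sqrt{\ln(n)}$.
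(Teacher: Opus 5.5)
Your argument is correct and follows the paper's route: Berry--Esseen is applied to the Bernoulli convolution for $D_n$, and your bound $\E|B_k-p_k|^3\le p_k(1-p_k)$ is a cleaner form of the paper's third-moment estimate. Your Step~2 usefully makes explicit, with the rate, the passage from $(\mu_n,\sigma_n)$ to $\ln(n)$. The paper handles that passage only via Slutsky, which needs $\mu_n-\ln(n)=o(\sqrt{\ln(n)})$ rather than just $\mu_n\sim\ln(n)$.

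Your closing optimality remark is not needed for the theorem, and it does not follow from $b_n$ being of exact order $1/\sqrt{\ln(n)}$ alone, since the Berry--Esseen error is of the same order. It is nevertheless true: $D_n$ is integer-valued with an atom of probability of order $1/\sigma_n$, which forces a Kolmogorov distance of order at least $1/\sqrt{\ln(n)}$ to any continuous limit.
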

\begin{proof}
Set
$$\mu_n =1 + \sum_{k=2}^{n-1} \frac m{\tau_k},
\quad
\mbox{and}
\quad
\sigma_n^2 
= \sum_{k=2}^{n-1} \frac m {\tau_k} \Big(1-\frac m {\tau_k} 
     \Big).$$
Normalize $D_n$ into  $Z_n :=\frac{D_n - \mu_n}{\sigma_n}$.
Note that the $k$th summand in $\sigma_n^2$ is of the exact order $1/k$. It follows that
$$\lim_{n\to\infty}
\sigma_n^2 = \infty.$$

It is known that independent Bernoulli random variables satisfying the latter condition give rise to a central limit theorem in the form
$$\frac {D_n- \mu_n} {\sigma_n} \convLaw \normal(0,1);$$
among other sources, see~\cite[Example 7.15, p.~193]{Karr}.

In our case, we have $\mu_n \sim \ln (n)$ and $\sigma_n \sim \sqrt{\ln (n)}$
as  $n\to\infty$.
Via Slutsky's theorem we simplify
the convergence to the more \ae sthetic form stated in the theorem.

For the rate of convergence, we use
Berry-Esseen's bound~\cite{Berry,Esseen}
(see also~\cite[Theorem 7.6.2, p.~356]{Gut2}),
which, for some positive constant, $C$, gives the bound
\begin{align*}
\sup_{x\in \mathbb R}\big|\prob (Z_n \le x) - \Phi(x) \big|
&\le C \,
\frac {\sum_{k=2}^{n-1} \E\big|X_k - \E[X_k]\big|^3} 
       {\big(\sum_{k=2}^{n-1}\V[X_k]\big)^{3/2}} \\
&\le C\,
\frac {\sum_{k=2}^{n-1} 
      \big( \frac m {\tau_k}\big)^3\big(1-\frac m {\tau_k})  
         + \sum_{k=2}^{n-1}\big(1-\frac  m {\tau_k})^3 \, \frac m {\tau_k}} 
    {\big(\sum_{k=2}^{n-1} \frac m {\tau_k}
              \big(1-\frac m {\tau_k}\big)\big)^{3/2}} \\
&= \frac {\Theta \big(\ln (n)\big)} {\big(\Theta \big(\ln (n)\big)\big)^{3/2}} \\
&=\frac 1 {\Theta (\sqrt{\ln (n)}\,)}.
\end{align*} 
\end{proof}
\subsection{Poisson approximation}
For two probability measures $\mathbb Q_1$ and $\mathbb Q_2$ (both over $\mathbb N \cup \{0\}$),  the total variation distance
is defined as
$$
d_{TV}(\mathbb Q_1,\mathbb Q_2)=  \sup_{A \subseteq \mathbb N \cup \{0\}} \big|\mathbb Q_1(A) 
                     - \mathbb Q_2(A)\big|= \frac 1 2 
   \sum_{j=0}^\infty \big|\prob(X=j) - \prob(Y=j) \big|,$$
for random variables $X$ and $Y$, with the respective distributions $\mathbb Q_1$
and $\mathbb Q_2$.
A main tool to bound  the total variation distance is the following 
theorem~\cite{Hall}, see also~\cite[Theorem 2.M, p.~34]{Holst}.
\begin{theorem} 
\label{Thm:Holst}
Let $X_1, \ldots, X_n$ be independent
Bernoulli random variables, with $X_k\ \eqlaw \ \Ber(p_k)$,
for $k=1, \ldots, n$, 
and let $S_n = \sum_{k=1}^n X_k$
be the sum of these variables. Define
$$\lambda_{n,1} = \sum_{k=1}^n \E[X_k] = \sum_{k=1}^n p_k,
\qquad
and\qquad  \lambda_{n,2} =\sum_{k=1}^n p_k^2.$$
Then, we have
$$d_{TV}\big({\cal L}(S_n),  {\cal L}\big(\Poi ({\lambda_{n,1}})\big)
         \big) \le (1 - e^{-\lambda_{n,1}}) \, \frac           
    {\lambda_{n,2}} {\lambda_{n,1}}, $$
where ${\cal L}(\cdot)$ denotes the probability law.
\end{theorem}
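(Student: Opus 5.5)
The bound is the classical Barbour--Hall estimate, and I would prove it with the Stein--Chen method. Write $\lambda=\lambda_{n,1}$ and let $Z\eqlaw\Poi(\lambda)$. For every $A\subseteq\mathbb N\cup\{0\}$ I would solve the Stein equation
$$\lambda g(j+1)-j\,g(j)=\mathbf 1_A(j)-\prob(Z\in A),\qquad j\ge 0,$$
with $g=g_A$ given explicitly by the usual recursive (tail-sum) formula and $g(0)=0$. Substituting $j=S_n$ and taking expectations gives
$$\prob(S_n\in A)-\prob(Z\in A)=\E\big[\lambda g(S_n+1)-S_n g(S_n)\big].$$
It therefore suffices to bound the right-hand side uniformly in $A$ by $(1-e^{-\lambda})\lambda_{n,2}/\lambda$.

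The key step uses independence. Put $S_n^{(k)}=S_n-X_k$; it is independent of $X_k$. Then
$$\E[X_k g(S_n)]=p_k\,\E[g(S_n^{(k)}+1)],$$
and since $\lambda=\sum_k p_k$,
$$\E\big[\lambda g(S_n+1)-S_n g(S_n)\big]=\sum_{k=1}^n p_k\,\E\big[g(S_n+1)-g(S_n^{(k)}+1)\big].$$
The two arguments differ only when $X_k=1$, which happens with probability $p_k$. So the $k$th term is at most $p_k^2\,\|\Delta g\|_\infty$, where $\Delta g(j)=g(j+1)-g(j)$. Summing over $k$ gives the bound $\lambda_{n,2}\sup_A\|\Delta g_A\|_\infty$.

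The main obstacle is the analytic estimate on the Stein solution,
$$\sup_A\sup_{j\ge1}\big|g_A(j+1)-g_A(j)\big|\le \frac{1-e^{-\lambda}}{\lambda}.$$
I would first reduce to singletons $A=\{i\}$, since $g_A=\sum_{i\in A}g_{\{i\}}$. For singletons I would use the explicit formula to show that $\Delta g_{\{i\}}(j)$ is negative for $j\neq i$ and positive only at $j=i$. This gives $\Delta g_A(j)\le \Delta g_{\{j\}}(j)$, and the matching lower bound follows by applying the same argument to the complement $A^c$, because $g_{A^c}=-g_A$. The remaining task is to bound $\Delta g_{\{j\}}(j)$ by writing it as a sum of Poisson probabilities:
$$\Delta g_{\{j\}}(j)=\frac1\lambda\Big(\sum_{r>j}e^{-\lambda}\frac{\lambda^r}{r!}\,\frac jr+\sum_{r\le j}e^{-\lambda}\frac{\lambda^r}{r!}\,\frac{\lambda}{j}\cdots\Big)\le\frac{1}{\lambda}\,\big(1-e^{-\lambda}\big).$$
The inequality holds because each weight is at most $1$ and the $r=0$ term is missing. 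Checking the sign pattern and this last inequality is the delicate bookkeeping. Everything else follows directly from independence of the $X_k$.

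Alternatively, since the result is stated as a citation to \cite{Hall} and \cite[Theorem 2.M]{Holst}, the paper may simply invoke it. My proposal is the self-contained Stein--Chen route outlined above.
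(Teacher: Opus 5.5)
The paper gives no proof of this theorem; it only quotes it from \cite{Hall} and \cite[Theorem 2.M]{Holst}. Your Stein--Chen argument is correct and is the standard Barbour--Hall proof found in those sources: the identity $\E[X_kg(S_n)]=p_k\E[g(S_n^{(k)}+1)]$, the reduction to $\lambda_{n,2}\sup_A\sup_{j\ge1}|\Delta g_A(j)|$, the singleton decomposition with the sign pattern of $\Delta g_{\{i\}}$, and $g_{A^c}=-g_A$.

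There is one slip in the displayed identity for $\Delta g_{\{j\}}(j)$. Write $\pi_r=e^{-\lambda}\lambda^r/r!$. The explicit Stein solution gives
$$\Delta g_{\{j\}}(j)=\frac1\lambda\Big(\sum_{r>j}\pi_r+\frac{\lambda}{j}\sum_{r=0}^{j-1}\pi_r\Big)=\frac1\lambda\Big(\sum_{r>j}\pi_r+\sum_{r=1}^{j}\frac rj\,\pi_r\Big).$$
So for $r>j$ the weight is $1$, not $j/r$. For $1\le r\le j$ the weight is $r/j$, not $\lambda/j$; the latter can exceed $1$, and attached to $\sum_{r\le j}$ it also wrongly includes the $r=0$ term. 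Your argument that "each weight is at most $1$ and the $r=0$ term is missing" only applies after the index shift $\lambda\pi_{r-1}=r\pi_r$. With that shift it gives $(1-e^{-\lambda})/\lambda$, with equality at $j=1$.
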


For a Poisson approximation, from Corollary~\ref{Cor:avevar}, we compute
\begin{align*}
\lambda_{n,1} &= \E[D_n] =                 
                1 + \harmonic_{n-2}^{(1)} \Big(\frac 1 m\Big), \\
\lambda_{n,2} &= 1 + \sum _{k=2}^{n-1} \Big(\frac m {\tau_k}\Big)^2
                        = 1 + \harmonic_{n-2}^{(2)} \Big(\frac 1 m\Big).
\end{align*}                        
We get
$$d_{TV}\Big({\cal L} (D_n), {\cal L} \Big(\Poi
                     \Big(1 + \harmonic_{n-2}^{(1)} \Big(\frac 1 m\Big)\Big)\Big) \Big)
        \le \frac {1+\psi(1, 1 + \frac 1 m)} {1+\harmonic_{n-2}^{(1)} (\frac1 m)}
                   = \mathrm{O}\Big(\frac{1}{\ln (n)}\Big),$$
      as  $n \to\infty$.

It is a somewhat slow rate to get good approximations for small $n$. 
For example, at~$n$ equals one million, the distribution function
of $D_{1000000}$ in a ternary recursive tree is uniformly not different from that of 
$\Poi(14.94754316)$
by as much as about $0.1401967603$.
\section{The height}
\label{Sec:height_1}

We reduce the analysis of the height $H_n$ of the $m$-ary recursive tree to the result on the height of a WRRT tree with suitably chosen weights. 

In the $m$-ary recursive tree, at each step $k\ge 2$, the $m$ nodes carrying label $k$ have the same parent and therefore the same depth. We may thus contract each such batch into a single 
{\em super node}. 
This preserves the depth of each batch and hence the height.
The resulting tree has vertex set $\{1,\dots,n\}$ and evolves as a weighted recursive 
tree with the deterministic weights
\begin{align}\label{our_weight}
w_1=1,\qquad w_k=m,\quad k\ge 2,
\end{align}
that is, at the various steps, recruiting
nodes are chosen with probabilities proportional to their weights; see~\cite{Pain2} for general weighted recursive trees. Various results on the height of weighted recursive trees from \cite{Sen21,Pain1,Pain2} therefore apply to the $m$-ary recursive tree.

\begin{corollary}\label{cor:height-asymptotics}
For every fixed $m\ge 1$,
\[
\frac{H_n}{\log n}\to e
\qquad\text{a.s.}
\]
as $n\to\infty$.
Furthermore, the sequence
\[
\Big(H_n - e\log n+\frac32\log\log n\Big)_{n\ge 2}
\]
is tight.
\end{corollary}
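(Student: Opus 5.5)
The plan is to reduce everything to known height results for weighted recursive trees (WRRT) through the super-node contraction described above. The first step is to make the coupling precise. Let $T_n$ be the $m$-ary recursive tree with highest label $n$. Let $\widehat T_n$ be the tree on $\{1,\dots,n\}$ in which vertex $k$ is the batch of $m$ nodes labelled $k$ (or the root when $k=1$), and $k$ is joined to $j$ if the batch $k$ was recruited by a node of batch $j$.

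At step $n$, the recruiter is uniform among $\tau_{n-1}=1+(n-2)m$ nodes. Hence batch $j$ is selected with probability $w_j/\sum_{i\le n-1}w_i$, where the weights are as in \eqref{our_weight}. Moreover, this choice is independent of the past given $\widehat T_{n-1}$. So $(\widehat T_n)_n$ is exactly a WRRT process with these weights, and the identification holds pathwise for the whole sequence. Since all nodes in a batch share a depth equal to the depth of the super node, $H_n$ equals the height of $\widehat T_n$ for every $n$, almost surely. The almost sure statement therefore transfers directly.

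The second step is to check that the weights \eqref{our_weight} satisfy the hypotheses of the cited results of Pain and S\'enizergues \cite{Pain1,Pain2} (and \cite{Sen21} for the strong law). Those results assume the partial sums $W_n=\sum_{i\le n}w_i$ grow like $W_n = c\, n^{\gamma}+$ (a small error), with suitable regularity. Here $W_n = mn-(m-1)$, so $\gamma=1$ and $c=m$, with a bounded error term. This is the regime of ordinary random recursive trees.

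In that regime, \cite{Sen21} gives $\mathrm{height}/\log n\to \gamma e = e$ almost surely. \cite{Pain2} gives tightness of the height minus $\gamma e\log n + \frac{3}{2\,\cdot}\log\log n$; I must verify that their second-order constant equals exactly $\frac32$ when $\gamma=1$. An alternative is to note that a deterministic perturbation of finitely many weights (only $w_1$ differs from a constant sequence) changes the height by at most a tight amount. To see this, rescale the weights by $1/m$, which leaves the law unchanged, and compare with $w_1=1/m$ versus the URRT.

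I expect the main obstacle to be precisely this matching of hypotheses. In particular, the tightness result in \cite{Pain2} may be stated for weights with $W_n\sim c n^\gamma$ under a specific error condition, such as $W_n = c n^\gamma(1+O(n^{-\varepsilon}))$, which holds here. The constants in the centring must then be identified with $e$ and $\frac32$.

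If a direct citation is awkward, I would fall back on a coupling with the standard recursive tree. With all weights equal to $m$ except the root weight $1$, the tree can be built from a URRT in which the root's initial attractiveness is reduced. Conditioning on the finitely many early steps, the root-weight discrepancy affects only which subtree of the root the first few vertices join. Decomposing the tree at the root, each root subtree is then itself a URRT-type tree of random size $\Theta(n)$. Its height has the known tight centring $e\log n-\frac32\log\log n$, and $\log(cn)$ differs from $\log n$ by $O(1)$, which yields both claims.
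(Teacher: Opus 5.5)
Your main route is the paper's. The paper contracts each batch to a super node, which gives (pathwise, as you correctly check) the weighted recursive tree with $w_1=1$, $w_k=m$. It then invokes Theorem~5.1 of \cite{Sen21} (with $\gamma=1$, $z_+=1$) for the strong law and Theorem~1.1 of \cite{Pain1} (with $\gamma=\theta=1$, under $(\mathcal{H}_{1,\gamma})$ and $(\mathcal{H}_2)$) for tightness; this is where $e$ and $\frac32$ come from, so it is \cite{Pain1} rather than \cite{Pain2} that you need.

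Your fallback sketches are not needed, and the second one is inaccurate as written. The reduced root weight changes the root-attachment probability at every step, $1/(1+(k-2)m)$ versus $1/(k-1)$, not only for the first few vertices. Also, only the early root subtrees have size $\Theta(n)$, so the upper bound over all root subtrees is never controlled. Your first alternative, however, can be made rigorous. Couple the independent parent choices monotonically with those of a standard recursive tree, and separately with those of a root with a single child carrying a standard recursive tree. This gives $H^{\mathrm{RRT}}_n\le H_n\le 1+\widetilde H^{\mathrm{RRT}}_{n-1}$ for all $n$ simultaneously, where $H^{\mathrm{RRT}}$ and $\widetilde H^{\mathrm{RRT}}$ are heights of standard random recursive trees. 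Both claims then follow from the classical results for that model, without checking any weighted-tree hypotheses.
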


\begin{proof}
Theorem 5.1 in \cite{Sen21} applies with the choices $\gamma=1$ and $z_+=1$ and implies the stated almost sure convergence. For the tightness result, Theorem 1.1 in \cite{Pain1} applies with $\gamma=1$ and $\theta=1$, since assumptions $(\mathcal{H}_{1,\gamma})$ 
and $(\mathcal{H}_{2})$ are satisfied for the sequence of weights $(w_k)_{k\ge 1}$ in (\ref{our_weight}).
\end{proof}

Further results, for example upper bounds on the right tail of the height, can be found in \cite{Pain2}.
\section{The average of the Wiener index}
\label{Sec:Wiener}
Let~$\tree_n$ be the $m$-ary recursive tree when the highest index in it is $n$, 
and let~$V_n$ be its set of vertices. 
We develop the expectation of $W_n$. Recall that the number of nodes in $\tree_n$ is $\tau_n=m(n-1)+1$. 
\begin{proposition}
\label{Prop:Wiener}
Let $W_n$ be the Wiener index of a random $m$-ary recursive tree with highest index $n$.
We have
\begin{align*}
\E[W_n] &= \big(m(n -1) +1\big) (n m +1) \Big(\harmonic_n^{(1)}\Big(\frac 1 m\Big) - \frac {mn}{mn+1}\Big) \\
&\sim m^2 n^2\ln (n),\qquad as\ n \to \infty.
\end{align*} 
\end{proposition}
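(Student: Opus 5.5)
The plan is to derive a one-step recurrence for $\E[W_n]$ by conditioning on the recruiter, solve it with a summation factor, and then read off the asymptotics.

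\textbf{The recurrence.} Pass from $\tree_n$ to $\tree_{n+1}$. A node $v\in V_n$ is chosen as recruiter, with probability $1/\tau_n$ for each of the $\tau_n$ nodes, and $m$ new nodes are attached to it. Write $\Delta(v)=\sum_{w\in V_n} d(v,w)$ for the total distance from $v$ to the old nodes. The new distances are of three kinds:
\begin{itemize}
\item For a new node $u$ and an old node $w$, we have $d(u,w)=d(v,w)+1$. Summed over the $m$ new nodes, these contribute $m(\Delta(v)+\tau_n)$.
\item Any two new nodes are at distance $2$. The $\binom m2$ such pairs contribute $m(m-1)$.
\end{itemize}
Hence
$$W_{n+1}=W_n+m\Delta(v)+m\tau_n+m(m-1).$$
Since $\sum_{v\in V_n}\Delta(v)=2W_n$, averaging over the uniform recruiter gives $\E[\Delta(v)\given \field_n]=2W_n/\tau_n$. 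Taking double expectations,
$$\E[W_{n+1}]=\Big(1+\frac{2m}{\tau_n}\Big)\E[W_n]+m\tau_n+m(m-1).$$

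\textbf{Solving it.} The key observation is that $\tau_{k+1}=\tau_k+m$, so the multiplier factors as
$$1+\frac{2m}{\tau_n}=\frac{\tau_n+2m}{\tau_n}=\frac{\tau_{n+2}}{\tau_n}.$$
This suggests normalizing by $x_n=\E[W_n]/(\tau_n\tau_{n+1})$. The recurrence then telescopes:
$$x_{n+1}=x_n+\frac{m\tau_n+m(m-1)}{\tau_{n+1}\tau_{n+2}}.$$
The initial value comes from $W_2=m+m(m-1)=m^2$, so $x_2=m^2/((m+1)(2m+1))$. Sum the increments, with $\tau_{k+1}=mk+1=m(k+1/m)$, and split each increment by partial fractions in $k+1/m$. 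The terms of order $1/(k+1/m)$ will produce $\harmonic_n^{(1)}(1/m)$, and the remaining pieces telescope into the rational correction $-mn/(mn+1)$. Multiplying back by $\tau_n\tau_{n+1}=(m(n-1)+1)(mn+1)$ should give the stated closed form.

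As a check, the claimed formula at $n=2$ evaluates to
$$(m+1)(2m+1)\Big(\frac m{m+1}+\frac m{2m+1}-\frac{2m}{2m+1}\Big)=m(2m+1)-m(m+1)=m^2=W_2.$$
So matching the closed form reduces to verifying that both sides satisfy the same first-order recurrence. I would therefore plug the formula into the recurrence rather than grind through the partial fractions directly.

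\textbf{Asymptotics and the main obstacle.} The asymptotic statement follows from $\harmonic_n^{(1)}(1/m)\sim\ln n$ and $(m(n-1)+1)(mn+1)\sim m^2n^2$, while the correction $mn/(mn+1)$ is bounded. The only real obstacle is bookkeeping: getting the partial-fraction constants and the boundary terms of the telescoped sum exactly right. Verifying the candidate closed form through the recurrence together with the base case $n=2$ sidesteps most of that risk.
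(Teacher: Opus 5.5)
Your proposal is correct and follows essentially the paper's route: the same forward recurrence (the paper's $\E[W_n]=g_n\E[W_{n-1}]+h_n$ with $h_n=m^2(n-1)$ is your $\E[W_{n+1}]=\frac{\tau_{n+2}}{\tau_n}\E[W_n]+m^2n$ up to an index shift), whose iterated product telescopes to exactly your summation factor $\tau_n\tau_{n+1}$, followed by partial fractions into generalized harmonic numbers. Your choice to confirm the closed form by checking the increment $x_{n+1}-x_n=\frac{m+1}{\tau_{n+2}}-\frac{1}{\tau_{n+1}}=\frac{m^2n}{\tau_{n+1}\tau_{n+2}}$ together with $W_2=m^2$ does go through, and it is only a cosmetic variant of the paper's direct summation.
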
  
\begin{proof} 
Let $v$ be a node in the tree $\tree_{n-1}$, and $d(v,x)$ be the
distance between the nodes $v$ and $x$
in the tree.
When the batch of $m$ nodes to be labeled~$n$ is recruited by $L_{n-1}\in V_{n-1}$, any  
 node $v$  in $\tree_{n-1}$ is at distance $1+ d(L_{n-1}, v)$ from
any node in the new batch.
Any pair of nodes in the incoming batch contributes an additional $2$ inter-distance
to the Wiener index of $\tree_n$, and there are ${m\choose 2}$ such contributions.

According to this counting argument,  
the Wiener index satisfies the conditional stochastic recurrence
$$W_n\given \field_{n-1}, L_{n-1} 
                 =W_{n-1}  +  2{m \choose 2} 
                           + m \sum_{v \in V_{n-1}}
                           \big(1 + d(L_{n-1} ,v)\big).$$
Take expectations with respect to $L_{n-1}$ to get
\begin{align*}
W_n \given \field_{n-1} &= W_{n-1}  +  2{m \choose 2} 
                           + m\Big(\tau_{n-1} + \frac {1} {\tau_{n-1}}\sum_{v \in V_{n-1} } 
                                 \sum_{\ell\in V_{n-1}}d(\ell,v)\Big)\nonumber \\  
          &= W_{n-1}  + m(m-1)
                           + m \Big(m(n-2)+1 + \frac {2\, W_{n-1}} {m(n-2)+1}\Big).                                       
\end{align*}     
A second expectation gives the unconditional expectation,    
which we can organize as a recurrence for the first moment:
$$\E[W_n] = \Big(\frac {m n + 1} {m(n-2)+1}\Big)\, \E[W_{n-1}]  + m^2 (n-1).$$
This is a standard linear recurrence of the form
$$\E[W_n] = g_n\, \E[W_{n-1}]  +h_n,$$
with 
$$g_n = \frac{m n + 1} {m(n-2)+1}, \qquad h_n = m^2 (n-1).$$
Iterating the recurrence, we obtain\footnote{Interpret an empty product 
as 1. Specifically,
when the summation index~$i$ is $n$, the product index $j$ goes from $n+1$ to $n$; we take this product to be 1.} 
$$\E[W_n] = \sum_{i=2}^n h_i \Big(\prod_{j=i+1}^n g_j\Big)  
       +  \Big(\prod_{j=1}^n g_j\Big)  \E[W_1]. $$
With $\E[W_1] = 0$, and the specific $g_j$ and $h_i$ in the recurrence, we get
\begin{align*}
\E[W_n] &= \sum_{i=2}^n  \frac {\big(m(n-1)+1\big)(mn+1)} 
          {\big(m(i-1)+1\big)(mi+1)} \, m^2(i-1)\\
             &=  \big(m(n-1)+1\big)(mn+1) \sum_{i=2}^n  \frac {i-1} {(i-1+1/m)(i+1/m)}\\
             &=  \big(m(n-1)+1\big)(mn+1) \sum_{i=2}^n \Big( \frac {(m+1)/m} {(i+1/m)}
                       -  \frac {1/m} {(i-1+1/m)}\Big).
\end{align*}       
We now discern the presence of generalized harmonic numbers; the result follows after some
algebraic simplification.          
\end{proof}
\begin{remark}
The generalized harmonic number $\harmonic_n^{(1)}(0)$ is the ordinary harmonic number of order $n$.
In the special case of the standard recursive tree (m=1), Proposition~\ref{Prop:Wiener} recovers
Neininger's result~\cite{Neininger}:
$$\E[W_n] = n^2 \harmonic_n^{(1)}(0)-2 n^{2} +n \harmonic_n^{(1)}(0) \sim n^2\ln (n).$$
\end{remark}
\begin{remark}
In~\cite{Achuna}, the authors consider hooking small graphs that are copies of a starting seed and establish a formula for the average  Wiener index.

When the seed graph is the complete graph on two vertices, such a hooking graph
becomes a standard recursive tree ($m=1$). 
The average Wiener index result there coincides with Proposition~\ref{Prop:Wiener}, too. 
\end{remark}
\section{Interplay between Wiener index and the internal path length: the distributional view}
For the derivation of the expectation $\E[W_n]$ in Section~\ref{Sec:Wiener}, the transition from~$n$ to $n-1$ is considered (sometimes called the forward view). 

Toward a limit law via contraction, we exploit the recursive nature of the random tree and decompose it at the root into $m+1$ trees (sometimes called the backward view), 
each being isomorphic to an  $m$-ary recursive tree and conditional on its size having the same distribution as the random  $m$-ary recursive tree.

We extend a known distributional recursive decomposition (which appears in~\cite{DOFI,Hofstad}) for the standard random recursive trees (the case $m=1$) to general $m$ as follows.  We denote the root of the tree by $v_0$ and its $m$ children emerging within the first step by $v_1,\ldots,v_m$. After $n-1$ steps (the highest index in the tree is then $n$), we decompose the tree into the~$m$ fringe trees rooted at $v_1,\ldots,v_m$, respectively (these are the trees rooted at $v_i$ with all the descendants of $v_i$ in the original tree, 
for $i=1, \ldots m$). Additionally, the remaining nodes of the tree form the 
$(m+1)$st tree of the decomposition. Hence, this $(m+1)$st tree consists of $v_0$ together with all the subtrees emerging from  $v_0$ after the first step. We call these $m+1$ trees the \textit{$m+1$ recursive parts of the tree}. 

To visualize the operations by which the decomposition is obtained, imagine
that the edges joining the root to the nodes $v_1, \ldots, v_m$ are simply removed. 
Figure~\ref{Fig:decomposition} illustrates a ternary recursive tree together with its decomposition into four trees.

\begin{figure}[ht]
\centering
\resizebox{\textwidth}{!}{
\begin{tikzpicture}[
vertex/.style={
circle,
draw,
minimum size=5mm,
inner sep=0pt,
font=\small
},
edge/.style={draw}
]

\begin{scope}

\node[vertex] (a1) at (0,0) {1};
\node[vertex] (a2)  at (-5,-1.5) {2};
\node[vertex] (a3)  at (-3,-1.5) {2};
\node[vertex] (a4)  at (-1,-1.5) {2};
\node[vertex] (a11) at ( 1,-1.5) {5};
\node[vertex] (a12) at ( 3,-1.5) {5};
\node[vertex] (a13) at ( 5,-1.5) {5};

\foreach \v in {2,3,4,11,12,13}
    \draw[edge] (a1)--(a\v);

\node[vertex] (a5) at (-5.7,-3) {3};
\node[vertex] (a6) at (-5.0,-3) {3};
\node[vertex] (a7) at (-4.3,-3) {3};

\foreach \v in {5,6,7}
    \draw[edge] (a2)--(a\v);

\node[vertex] (a8)  at (-3.7,-3) {4};
\node[vertex] (a9)  at (-3.0,-3) {4};
\node[vertex] (a10) at (-2.3,-3) {4};

\foreach \v in {8,9,10}
    \draw[edge] (a3)--(a\v);

\node[vertex] (a14) at (-1.7,-3) {6};
\node[vertex] (a15) at (-1.0,-3) {6};
\node[vertex] (a16) at (-0.3,-3) {6};

\foreach \v in {14,15,16}
    \draw[edge] (a4)--(a\v);

\node[vertex] (a17) at (-3.7,-4.5) {8};
\node[vertex] (a18) at (-3.0,-4.5) {8};
\node[vertex] (a19) at (-2.3,-4.5) {8};

\foreach \v in {17,18,19}
    \draw[edge] (a9)--(a\v);

\node[vertex] (a20) at (2.3,-3) {7};
\node[vertex] (a21) at (3.0,-3) {7};
\node[vertex] (a22) at (3.7,-3) {7};

\foreach \v in {20,21,22}
    \draw[edge] (a12)--(a\v);

\node at (0,-5.5) {\large The original tree};

\end{scope}

\node at (7,-2.3) {\Large $\Longrightarrow$};

\begin{scope}[xshift=14cm]

\node[vertex] (a1) at (0,0) {1};
\node[vertex] (a2)  at (-5,-1.5) {2};
\node[vertex] (a3)  at (-3,-1.5) {2};
\node[vertex] (a4)  at (-1,-1.5) {2};
\node[vertex] (a11) at ( 1,-1.5) {5};
\node[vertex] (a12) at ( 3,-1.5) {5};
\node[vertex] (a13) at ( 5,-1.5) {5};

\foreach \v in {11,12,13}
    \draw[edge] (a1)--(a\v);

\node[vertex] (a5) at (-5.7,-3) {3};
\node[vertex] (a6) at (-5.0,-3) {3};
\node[vertex] (a7) at (-4.3,-3) {3};

\foreach \v in {5,6,7}
    \draw[edge] (a2)--(a\v);

\node[vertex] (a8)  at (-3.7,-3) {4};
\node[vertex] (a9)  at (-3.0,-3) {4};
\node[vertex] (a10) at (-2.3,-3) {4};

\foreach \v in {8,9,10}
    \draw[edge] (a3)--(a\v);

\node[vertex] (a14) at (-1.7,-3) {6};
\node[vertex] (a15) at (-1.0,-3) {6};
\node[vertex] (a16) at (-0.3,-3) {6};

\foreach \v in {14,15,16}
    \draw[edge] (a4)--(a\v);

\node[vertex] (a17) at (-3.7,-4.5) {8};
\node[vertex] (a18) at (-3.0,-4.5) {8};
\node[vertex] (a19) at (-2.3,-4.5) {8};

\foreach \v in {17,18,19}
    \draw[edge] (a9)--(a\v);

\node[vertex] (a20) at (2.3,-3) {7};
\node[vertex] (a21) at (3.0,-3) {7};
\node[vertex] (a22) at (3.7,-3) {7};

\foreach \v in {20,21,22}
    \draw[edge] (a12)--(a\v);

\node at (0,-5.5) {\large The resulting forest};

\end{scope}

\end{tikzpicture}}
\caption{Decomposition of a ternary recursive tree into four
ternary recursive trees by deleting the edges connecting the root with any of the nodes labeled with 2}
\label{Fig:decomposition}
\end{figure}

Strictly speaking, the recursive parts do not carry labels corresponding to the way 
$m$-ary recursive trees are labeled. Take for instance the third tree from left in the decomposition in Figure~\ref{Fig:decomposition}.
The nodes in this tree are labeled with 2 and 6. It would be a ternary recursive tree in the strict
sense of labeling, if the 2 is reduced to 1 and the 6 is reduced to 2. However,
this recursive part has all the structural properties of a ternary recursive tree of size~4: same shape, and consequently same topological values, such as height, Wiener index, etc., and occurs with the same probability.
  
More generally, each recursive part can be made to coincide with an $m$-ary recursive tree, occurring with the same probability and carrying the same labels, if the numbers in it are changed to be their
ranks within the set of numbers in the tree. The only aspect of the recursive 
parts that pertains to the analysis of topological indices is that they are {\em isomorphic}
to $m$-ary trees of the same sizes and occurring with the same probabilities.

The Wiener index $W_n$ can be derived from the Wiener indices of its $m+1$ recursive parts. To prepare for such a recursive distributional decomposition, let 
$L_i^{(n)}$ denote the number of steps performed within the subtree
rooted at $v_i$, for $i=1, \ldots, m+1$,   when the full tree is at age $n-1$, and collect these contributions in the row vector
$L^{(n)}=(L_1^{(n)},\ldots,L_{m+1}^{(n)})$. Set $I^{(n)}=(I_1^{(n)},\ldots,I_{m+1}^{(n)}):=L^{(n)}+(1,\ldots,1)$. This shift is necessary to be in accordance with the literature such as \cite{Neininger}, to which our subsequent setup reduces in the case $m=1$.

Let $I_i^{(n)}$ be the internal time  
spent on the construction of the $i$th part. 
In other words, it is the number of updates or additions in the $i$th recursive part.
Conditional on $I^{(n)}=(i_1,\ldots,i_{m+1})$, 
with $i_r\in\{1,\dots,n-1\}$ and $\sum_{r=1}^{m+1}i_r=n+m-1$, we have $m+1$ stochastically independent recursive parts that are distributed as random $m$-ary recursive trees after $i_1-1,\ldots,i_{m+1}-1$ steps, respectively. Furthermore, we use $N^{(r)}_{i_r}$ for the number of nodes within the $r$th recursive part after $i_r-1$ steps. Hence, 
$N^{(r)}_{i_r}$ is equal to
$1+m(i_r-1)=mi_r-(m-1)$. 

To analyze the distribution of $W_n$ asymptotically, we need $P_n$ as well. These two quantities are dependent. (Below, we derive their asymptotic correlation.) For this reason, we consider their joint distribution and hence the pair $(W_n,P_n)$. For $r=1,\dots,m+1$, let $(W_n^{(r)},P_n^{(r)})_{n\ge 0}$ be independent copies of $(W_n,P_n)_{n\ge 0}$. 

We start with the recursive distributional decomposition of the internal path length. 
The internal path length can be accumulated from the recursive parts---the $(m+1)$st 
part contributes its own internal path length. The internal path length 
of the $i$th recursive part, for $i=1, \ldots,m$ enters the picture with its value adjusted by the size of the part,
as the contribution of each node in the part adds an extra 1 upon hooking to the root
of the full tree. This gives rise to the distributional recurrence
\begin{align*} 
    P_n&\ \eqlaw \ P_{I_{m+1}^{(n)}}^{(m+1)}+\sum_{r=1}^{m}\Big( P_{I_r^{(n)}}^{(r)}+N^{(r)}_{I_r^{(n)}}\Big)\\
    & =\Big(\sum_{r=1}^{m+1}P_{I_r^{(n)}}^{(r)} \Big)+m(n-1)+1-N^{(m+1)}_{I_{m+1}^{(n)}},
\end{align*} 
The Wiener index of the full tree can be obtained from the 
recursive parts, too. Each part contributes its own Wiener index and adjustments
upon hooking to the root of the full tree. The parts are taken in pairs, and their respective 
sizes determine the amount of inter-pair adjustment.
According to this argument, the Wiener index for the full tree satisfies the
distributional recurrence
\begin{align*} 
    W_n &\ \eqlaw \ \sum_{r=1}^{m+1} W_{I_r^{(n)}}^{(r)} 
                    + \sum_{1 \le i < j \le m} 
                   \big(N_{I_i^{(n)}}^{(i)}P_{I_j^{(n)}}^{(j)}
                    + N_{I_j^{(n)}}^{(j)}P_{I_i^{(n)}}^{(i)}+2N_{I_i^{(n)}}^{(i)}N_{I_j^{(n)}}^{(j)}\big)\\ 
    &\qquad {}+\sum_{k=1}^m \big(N_{I_k^{(n)}}^{(k)}P_{I_{m+1}^{(n)}}^{(m+1)}+N_{I_{m+1}^{(n)}}^{(m+1)}P_{I_k^{(n)}}^{(k)}+N_{I_k^{(n)}}^{(k)}N_{I_{m+1}^{(n)}}^{(m+1)}\big). 
\end{align*} 
 Consequently, $W_n$ and $P_n$ satisfy the two-dimensional distributional recurrence
\begin{align*} 
    \begin{pmatrix} W_n \\ P_n \end{pmatrix}\eqlaw \sum_{r=1}^{m+1} A_r(n) \begin{pmatrix} W_{I_r^{(n)}}^{(r)} \\ P_{I_r^{(n)}}^{(r)} \end{pmatrix}
    +b(n),
\end{align*} 
with random $2\times 2$ matrices
$$
    A_r(n)=\begin{pmatrix} 1 & m\big(n-I_r^{(n)}\big) \\ 0 & 1 \end{pmatrix},$$
and a random vector
$ b(n)=\big(\begin{smallmatrix} b_1(n)\\ b_2(n)\end{smallmatrix}\big)$,
with the two components

\begin{align*} 
    b_1(n)&=2\sum_{1 \le i<j \le m} \big(m\big(I_i^{(n)}-1\big)+1\big)\big(m\big(I_j^{(n)}-1\big)+1\big)\\
    &\quad\quad~+\sum_{k=1}^m\big(m\big(I_k^{(n)}-1\big)+1\big)\big(m \big(I_{m+1}^{(n)}-1\big)+1\big),\\ 
    b_2(n)&=m(n-I_{m+1}^{(n)}). 
\end{align*} 

According to Proposition \ref{Prop:Wiener}, an asymptotic equivalent to the Wiener index is
$$
    \alpha_n:=\E[W_n]=m^2n^2\ln(n)\
          -m^2 \Big(1+\psi\Big(\frac{m+1}{m}\Big)\Big)n^2+o(n^2),$$
and according to Corollary~\ref{Cor:IPL},
an asymptotic equivalent to the internal path length is 
$$\gamma_n: =
       mn\ln(n)-m\psi\Big(\frac{m+1}{m}\Big)n+o(n).$$
\subsection{A limit law for the joint distribution of the Wiener index and the internal path length}
We work on the space of all centered probability measures on $\R^2$ with finite second moments denoted by 
\begin{align*}
    \mathcal{M}^2_{0,2}:=\big{\{}
              \mathcal{L}(X)\,|\,\E[\|X\|^2]<\infty, \E[X]=0\big{\}}.
\end{align*}
Furthermore, we define the Wasserstein-$\ell_2$ metric on the space $\mathcal{M}^2_{0,2}$
\begin{align*}
\ell_2(\mu,\nu):=\inf\big\{\E\left[\Big\Vert Y-Z\right\Vert^2\Big]^{1/2}:\mathcal{L}(Y)=\mu,\mathcal{L}(Z)=\nu\big\}.
\end{align*}
For more detailed information on $\ell_2$ in the present context, see \cite{Neininger2001}. 
\begin{theorem}\label{thm_1} 
    Let
    \begin{align*} 
        ( W_n^\ast, P_n^\ast )^T:=\Big(\frac {W_n- \E[W_n]} {n^2},  \frac{P_n-\E[P_n]}{n}\Big)^T.
    \end{align*} 
    Then, we have
    \begin{align*}
        \begin{pmatrix}
            W_n^\ast \\
            P_n^\ast
        \end{pmatrix} \overset{\ell_2}{\longrightarrow} \begin{pmatrix}
            W \\ P
        \end{pmatrix},
    \end{align*}
    where $\mathcal{L}(W,P)$ is the unique fixed point of the map $T:\mathcal{M}^2_{0,2} \to \mathcal{M}^2_{0,2}$ given by
    \begin{align*}
        T(\nu):=\mathcal{L}\Big(\sum_{r=1}^{m+1} A_r \begin{pmatrix}
            W^{(r)}\\
            P^{(r)}
        \end{pmatrix}+b\Big),
    \end{align*}
    with $A_r$ and $b$ given in \eqref{limit:A_r} and \eqref{limit:b}, where $(
        W^{(1)},
        P^{(1)})
   ,\dots,(
        W^{(m+1)},
        P^{(m+1)})$, $(V_1,\dots,V_{m+1})$ are independent and $\mathcal{L}
        ((W^{(r)},
        P^{(r)})^T)
   =\nu$, for all $1\le r\le m+1$. 
\end{theorem}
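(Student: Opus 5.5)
The plan is the standard $\ell_2$-contraction method in the form of \cite{Neininger2001}, with the Euclidean norm on $\R^2$ replaced by a suitably weighted norm.

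\textbf{Step 1: normalized recurrence.} Subtract the means $\alpha_n=\E[W_n]$, $\gamma_n=\E[P_n]$ and scale by $\mathrm{diag}(n^{-2},n^{-1})$. The distributional recurrence for $(W_n,P_n)$ then becomes
\begin{align*}
\begin{pmatrix} W_n^\ast\\ P_n^\ast\end{pmatrix}\eqlaw \sum_{r=1}^{m+1} A_r^{(n)}\begin{pmatrix} W^{(r)\ast}_{I_r^{(n)}}\\ P^{(r)\ast}_{I_r^{(n)}}\end{pmatrix}+b^{(n)},
\qquad
A_r^{(n)}=\begin{pmatrix} (I_r^{(n)}/n)^2 & m(n-I_r^{(n)})I_r^{(n)}/n^2\\ 0 & I_r^{(n)}/n\end{pmatrix}.
\end{align*}
The toll $b^{(n)}$ collects $n^{-2}b_1(n)$ and $n^{-1}b_2(n)$. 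It also collects the centering terms $n^{-2}\big(\sum_r(\alpha_{I_r}+m(n-I_r)\gamma_{I_r})-\alpha_n\big)$ and $n^{-1}\big(\sum_r\gamma_{I_r}-\gamma_n\big)$.

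\textbf{Step 2: limits of the coefficients.} The split vector $I^{(n)}$ is governed by a P\'olya urn with $m+1$ colours. Each of $v_1,\dots,v_m$ and the root part starts with weight $1$, and a step inside a part adds $m$ to that part's weight. Hence $I^{(n)}/n\to V=(V_1,\dots,V_{m+1})$ almost surely and in every $L_p$, with $V\sim\Dir(1/m,\dots,1/m)$. This gives the limit coefficients
\begin{align*}
A_r=\begin{pmatrix} V_r^2 & mV_r(1-V_r)\\ 0& V_r\end{pmatrix}.
\end{align*}
For $b$, insert the expansions of $\alpha_n$ and $\gamma_n$ from Proposition~\ref{Prop:Wiener} and Corollary~\ref{Cor:IPL}. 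Because $\sum_r I_r=n+m-1$, the $n^2\ln n$ and $n\ln n$ terms cancel up to entropy terms $\sum_r V_r\ln V_r$. The result is an explicit $L_2$-limit $b$ built from $\sum_{i<j}V_iV_j$, $V_{m+1}\sum_k V_k$, $1-V_{m+1}$ and the entropy terms, with $\E[b]=0$. Showing $\|b^{(n)}-b\|_2\to0$ and $\|A_r^{(n)}-A_r\|_2\to0$ is routine. It uses the $o(n^2)$ and $o(n)$ remainders, and the facts that $x\ln x$ is uniformly continuous on $[0,1]$ and $I_r^{(n)}/n\to V_r$ in $L_2$.

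\textbf{Step 3: contraction (expected main obstacle).} The map $T$ sends $\mathcal M^2_{0,2}$ into itself and satisfies
\[
\ell_2(T\mu,T\nu)^2\le \Big(\sum_r\E\|A_r\|_{\op}^2\Big)\ell_2(\mu,\nu)^2 .
\]
With the Euclidean operator norm, the off-diagonal entry $mV_r(1-V_r)$ can push $\sum_r\E\|A_r\|^2_{\op}$ above $1$. The fix I would try first is the norm $\|(x,y)\|_\varepsilon:=(x^2+\varepsilon^{-2}y^2)^{1/2}$. This amounts to conjugating $A_r$ by $\mathrm{diag}(1,\varepsilon^{-1})$, which turns the off-diagonal entry into $\varepsilon mV_r(1-V_r)$ and keeps the diagonal $V_r^2,V_r$. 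As $\varepsilon\to0$, $\|A_r\|_{\op}^2\to\max(V_r^4,V_r^2)=V_r^2$. Since $\sum_r\E[V_r^2]=(m+1)\frac{2}{(m+1)(m+2)}\cdot\ldots<1$ (each $\E[V_r^2]=\frac{(1/m)(1/m+1)}{((m+1)/m)((m+1)/m+1)}=\frac{m+1}{(m+1)(2m+1)}$), the sum equals $\frac{m+1}{2m+1}<1$. By dominated convergence, some fixed $\varepsilon>0$ gives
\[
\xi:=\sum_r\E\|A_r\|_{\varepsilon,\op}^2<1 .
\]
The weighted $\ell_2$ metric is equivalent to the Euclidean one, so $T$ is a contraction on the complete space $(\mathcal M^2_{0,2},\ell_2)$, and Banach's theorem yields the unique fixed point $\mathcal L(W,P)$.

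\textbf{Step 4: convergence.} Let $\Delta(n):=\ell_2((W_n^\ast,P_n^\ast),(W,P))$, measured in the weighted norm. Couple the subtrees optimally, conditionally on $I^{(n)}$, and bound $\Delta(n)^2$ by
\[
\E\Big[\sum_r\|A^{(n)}_r\|_{\op}^2\,\Delta(I_r^{(n)})^2\Big]+o(1),
\]
using the $L_2$-convergences from Step 2. The standard argument then applies: first show boundedness of $\Delta(n)$, then set $\limsup_n\Delta(n)^2=\eta$ and obtain $\eta\le\xi\eta$. This uses $\prob(I_r^{(n)}\le K)\to0$ for each fixed $K$, and $\E\sum_r\|A^{(n)}_r\|^2\to\xi<1$. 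Hence $\eta=0$, which gives the asserted $\ell_2$-convergence.
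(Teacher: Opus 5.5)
Your proposal is correct and follows the paper's proof essentially step for step. It uses the same normalized recurrence, the same P\'olya-urn/Dirichlet$(1/m,\dots,1/m)$ limits for $A_r$ and $b$, and the same weighted-norm trick: your $\|\cdot\|_\varepsilon$ is the paper's $\|\cdot\|_\lambda$ with $\lambda=\varepsilon^{-2}\to\infty$, which yields $\sum_r\E[V_r^2]=\frac{m+1}{2m+1}<1$. It then closes with the same Neininger-type convergence argument. The only fix needed is to delete the stray factor $\frac{2}{(m+1)(m+2)}$ in Step 3, which is the moment for the uniform Dirichlet law; your subsequent value $\E[V_r^2]=\frac{1}{2m+1}$ is the correct one.
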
 
A proof of this theorem is given in Section \ref{sec_proof_5.1}. Note that the $\ell_2$-convergence in Theorem \ref{thm_1} implies, firstly, convergence in distribution of the random vectors $( W_n^\ast, P_n^\ast )$, and secondly, convergence of the second (mixed) moments.  Hence, we obtain the following:
\begin{corollary}
We have
\begin{align*}
    \Var(W_n) \sim \sigma_W^2n^4, \qquad \Var(P_n) \sim \sigma_P^2n^2, \qquad \Cov(W_n,P_n) \sim \sigma_{WP}n^3,
\end{align*}
with constants $\sigma_W^2$,  $\sigma_P^2$,  $\sigma_{WP}$ given in Theorem \ref{thm:covariance-fixed-point} (see also Table \ref{table_1} for numerical values).
\end{corollary}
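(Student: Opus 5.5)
The corollary follows from Theorem~\ref{thm_1}, using the fact that convergence in the Wasserstein-$\ell_2$ metric on $\mathcal{M}^2_{0,2}$ carries second (mixed) moments along. The constants are then identified by taking second moments in the fixed-point equation for $T$.

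\emph{Step 1: passing to the limit in second moments.} By Theorem~\ref{thm_1}, for each $n$ we may choose an optimal coupling of $(W_n^\ast,P_n^\ast)$ and $(W,P)$ with $\E\|(W_n^\ast,P_n^\ast)-(W,P)\|^2=\ell_2^2\big(\mathcal{L}(W_n^\ast,P_n^\ast),\mathcal{L}(W,P)\big)\to0$. Both vectors are centered, since $\E[W_n^\ast]=\E[P_n^\ast]=0$ by construction and the fixed point lies in $\mathcal{M}^2_{0,2}$. We write
\begin{align*}
\E[W_n^\ast P_n^\ast]-\E[WP]=\E[(W_n^\ast-W)P_n^\ast]+\E[W(P_n^\ast-P)].
\end{align*}
Applying the Cauchy--Schwarz inequality to each term bounds it by $\|W_n^\ast-W\|_2\|P_n^\ast\|_2+\|W\|_2\|P_n^\ast-P\|_2$. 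Here $\|P_n^\ast\|_2$ stays bounded because $\|P_n^\ast\|_2\le \|P\|_2+\|P_n^\ast-P\|_2$, so the difference tends to zero. The same argument, with $P$ replaced by $W$ and vice versa, gives $\E[(W_n^\ast)^2]\to\E[W^2]$ and $\E[(P_n^\ast)^2]\to\E[P^2]$.

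\emph{Step 2: undoing the normalization.} Because the vectors are centered, these expectations are exactly
\begin{align*}
\E[(W_n^\ast)^2]=\frac{\Var(W_n)}{n^4},\qquad \E[(P_n^\ast)^2]=\frac{\Var(P_n)}{n^2},\qquad \E[W_n^\ast P_n^\ast]=\frac{\Cov(W_n,P_n)}{n^3}.
\end{align*}
Setting $\sigma_W^2:=\E[W^2]$, $\sigma_P^2:=\E[P^2]$ and $\sigma_{WP}:=\E[WP]$, Step 1 gives the three asymptotic statements, read as $\Var(W_n)=\sigma_W^2n^4+o(n^4)$ and so on.

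\emph{Step 3: identifying the constants.} This is the step referred to Theorem~\ref{thm:covariance-fixed-point}. We expand the second moments of $\sum_r A_r(W^{(r)},P^{(r)})^T+b$, using three facts:
\begin{itemize}
\item the summands $(W^{(r)},P^{(r)})$ are independent and centered;
\item they are independent of the coefficient vector $(V_1,\dots,V_{m+1})$;
\item the limit $b$ has mean zero, which is inherited from the centering.
\end{itemize}
As a result, every cross term between different $r$ vanishes. This yields a linear system for $(\sigma_W^2,\sigma_{WP},\sigma_P^2)$ whose coefficients are moments of the Dirichlet-type vector $(V_r)$.

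The system is triangular. The $P$-equation involves only $\sigma_P^2$. The covariance equation then involves $\sigma_{WP}$ and $\sigma_P^2$. The $W$-equation involves all three, through the off-diagonal entry of $A_r$.

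\emph{Main obstacle.} The delicate point is not the limit passage but the interpretation of ``$\sim$''. This requires $\sigma_W^2$, $\sigma_P^2$ and $\sigma_{WP}$ to be nonzero. I would settle this from the explicit solution of the linear system, as tabulated in Table~\ref{table_1}. Failing that, $\sigma_P^2>0$ and $\sigma_W^2>0$ follow because a degenerate fixed point would force $b\equiv0$ almost surely, and $b$ is non-constant.
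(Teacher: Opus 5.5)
Your proposal is correct and follows the paper's route: the paper notes in one line that the $\ell_2$-convergence in Theorem~\ref{thm_1} yields convergence of the second mixed moments of $(W_n^\ast,P_n^\ast)$ (your Cauchy--Schwarz step makes this explicit), and it obtains the constants as the entries of $\Cov((W,P)^T)$ in the appendix from the triangular system $\Sigma=\sum_{r=1}^{m+1}\E[A_r\Sigma A_r^T]+\Cov(b)$. Your remark that ``$\sim$'' needs nonzero constants goes beyond the paper, which does not address it. Your sketch for $\sigma_P^2,\sigma_W^2>0$ can be completed from the $P$- and $W$-equations together with $|\sigma_{WP}|\le\sigma_W\sigma_P$, whereas $\sigma_{WP}\neq0$ is supported only numerically in Table~\ref{table_1} for $m\le10$.
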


Moreover, we obtain a weak law of large numbers for the Wiener
index. 
\begin{corollary} 
We have 
$$\frac {W_n} {n^2 \ln (n)} \inprob   m^2.$$ 
\end{corollary}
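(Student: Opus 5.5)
The plan is to deduce the weak law directly from Theorem~\ref{thm_1} together with the first-moment expansion in Proposition~\ref{Prop:Wiener}. I split
$$\frac{W_n}{n^2\ln(n)} = \frac{W_n-\E[W_n]}{n^2\ln(n)} + \frac{\E[W_n]}{n^2\ln(n)} = \frac{W_n^\ast}{\ln(n)} + \frac{\E[W_n]}{n^2\ln(n)}$$
and handle the two terms separately.

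For the deterministic term, Proposition~\ref{Prop:Wiener} gives $\E[W_n]\sim m^2n^2\ln(n)$. Hence $\E[W_n]/(n^2\ln(n))\to m^2$. This can also be read off from the expansion $\alpha_n = m^2n^2\ln(n) - m^2(1+\psi((m+1)/m))n^2 + o(n^2)$, since the correction is $O(n^2)$ and vanishes after division by $n^2\ln(n)$.

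For the random term, Theorem~\ref{thm_1} gives $\ell_2$-convergence of $(W_n^\ast,P_n^\ast)$. In particular the second moments converge, so $\sup_n \E[(W_n^\ast)^2]<\infty$. Equivalently, by the preceding corollary, $\Var(W_n)\sim \sigma_W^2 n^4$, so $\Var(W_n)=O(n^4)$. Chebyshev's inequality then yields, for every $\varepsilon>0$,
$$\prob\Big(\Big|\frac{W_n^\ast}{\ln(n)}\Big|>\varepsilon\Big)\le \frac{\E[(W_n^\ast)^2]}{\varepsilon^2\ln^2(n)} = O\Big(\frac{1}{\ln^2(n)}\Big)\to 0.$$
So $W_n^\ast/\ln(n)\inprob 0$. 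Slutsky's theorem combines the two terms and gives $W_n/(n^2\ln(n))\inprob m^2$.

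There is no real obstacle once Theorem~\ref{thm_1} is available: the only nontrivial input is the uniform bound on $\E[(W_n^\ast)^2]$, and it follows immediately from $\ell_2$-convergence. The bound is finite because the limit law lies in $\mathcal M^2_{0,2}$ and $\ell_2$-convergence implies convergence of second moments. An alternative that avoids Theorem~\ref{thm_1} would be a direct second-moment recurrence for $W_n$ in the forward view of Section~\ref{Sec:Wiener}, showing $\Var(W_n)=o(n^4\ln^2(n))$. That route would be considerably more computational, so I would rely on the theorem.
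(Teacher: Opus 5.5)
Your proposal is correct and follows the argument the paper intends. The paper gives no separate proof; it obtains the weak law from Theorem~\ref{thm_1}, which gives $\sup_n \E[(W_n^\ast)^2]<\infty$ and hence $W_n-\E[W_n]=O_{\prob}(n^2)$, together with $\E[W_n]\sim m^2n^2\ln(n)$ from Proposition~\ref{Prop:Wiener}. Your Chebyshev step just makes this deduction explicit; tightness of $W_n^\ast$, which already follows from convergence in distribution, would suffice as well.
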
 
\subsubsection{The contraction method in $\R^d$ as a Hilbert space}
\label{sec:cm} 
To prove Theorem \ref{thm_1}, we employ the contraction method based on \cite{Neininger2001}. However, we take a more general point of view and work with weighted norms, see (\ref{lambda_norm}), instead of the Euclidean norms. This enables us to verify the contraction property needed for the contraction method, see Lemmas \ref{normshift} and \ref{contraction}. We consider the use of weighted norms as a technical novel part of our analysis.

Let $\langle \cdot,\cdot \rangle$ be some scalar-product on $\R^2$ (the case $\R^d$ is similar) inducing a norm $\Vert \cdot \Vert$, and let $A^\ast$ be the adjoint matrix of $A$, i.e. $\langle Ax,y\rangle=\langle x,A^\ast y\rangle$, for all $x,y \in \R^2$.
Let $(X_n)_{n \ge 0}$ denote a sequence of real-valued, centered, two-dimensional random vectors satisfying the distributional recurrence
\begin{align}\label{recurrence} 
    X_n \overset{d}{=} \sum_{r=1}^K A_r^{(n)} X_{I_r^{(n)}}^{(r)}+b^{(n)},
\end{align} 
for $n \ge n_0$, where $(X_n^{(1)})_{n \ge 0},\dots,(X_n^{(K)})_{n \ge 0}$ and $(A_1^{(n)},\dots,A_K^{(n)},b^{(n)})$ are independent, and $X_j^{(r)}$ is distributed as $X_j$ for all $r=1,\dots,K$ and $j \ge 0$. The coefficients $A_r^{(n)}$ are real-valued random $2 \times 2$ matrices, $b^{(n)}$ is a centered, real-valued $2$-dimensional row
vector, and $I^{(n)}=\big(I_1^{(n)},
\dots,I_K^{(n)}\big)$ is a vector of random integers in $\{0,\dots,n\}$, while $K$ and $n_0$ are constants. All these random quantities are assumed to be square-integrable. 

Furthermore, we assume that the following conditions hold: 
\begin{enumerate} 
    \item[(A)] $\big(A_1^{(n)},\dots,A_K^{(n)},b^{(n)}\big) \overset{\ell_2}{\longrightarrow} 
   (A_1,\dots,A_K,b)$, 
    \item[(B)] $\sum_{r=1}^K {\E\big[\Vert A_r^\ast A_r\Vert_{\op}\big]}<1$, 
    \item[(C)] $\sum_{r=1}^K {\E\Big[\eins_{\{I_r^{(n)}\le \ell\}\cup\{I_r^{(n)}=n\}} \big\Vert A_r^{(n),\ast}A_r^{(n)}\big\Vert_{\op}\Big]} \to 0$, as $n \to \infty$, for all constants $\ell \ge 0$. 
\end{enumerate} 

We can now state a theorem that is closely related
to \cite[Lemma 3.1, Theorem 4.1]{Neininger2001}. Since we do not use the operator norm associated with the standard Euclidean norm, for which the adjoint and transpose matrices are identical, we have to state conditions (B) and (C) 
with the adjoint matrix. A more general development of the contraction method in Hilbert spaces can be found in~\cite{drjane08}, which is not required in the present context. 
\begin{theorem}
Let $(X_n)_{n \ge 0}$ be a sequence satisfying \eqref{recurrence} with the properties mentioned above. Furthermore, let the conditions {\rm (A)--(C)} be satisfied. Then, we have $\ell_2(X_n,X) \longrightarrow 0$, where $\mathcal{L}(X)$ is the unique fixed point of the map $T:\mathcal{M}^2_{0,2} \to \mathcal{M}^2_{0,2}$ defined by
\begin{align*}
    T(\nu)=\mathcal{L}\Big(\sum_{r=1}^K A_r X^{(r)}+b \Big),
\end{align*}
where $(A_1,\dots,A_K,b),X^{(1)},\dots,X^{(K)}$ are independent and $\mathcal{L}(X^{(r)})=\nu$, for $r=1,\dots,K$.
\end{theorem}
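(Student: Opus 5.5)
We follow the two-step scheme of \cite[Lemma 3.1, Theorem 4.1]{Neininger2001}, with every Euclidean notion replaced by the Hilbert space structure coming from $\langle\cdot,\cdot\rangle$. Throughout, $\ell_2$ is taken with respect to the norm $\Vert\cdot\Vert$ induced by $\langle\cdot,\cdot\rangle$.

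\emph{Step 1: $T$ is a strict contraction on $(\mathcal{M}^2_{0,2},\ell_2)$.} First, $T$ maps $\mathcal{M}^2_{0,2}$ into itself. Finite second moments follow from square integrability of $(A_1,\dots,A_K,b)$ together with independence. Centering follows because $\E[A_rX^{(r)}]=\E[A_r]\E[X^{(r)}]=0$ and $\E[b]=0$.

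For $\mu,\nu\in\mathcal{M}^2_{0,2}$, take optimal couplings $(Y^{(r)},Z^{(r)})$ with $\E\Vert Y^{(r)}-Z^{(r)}\Vert^2=\ell_2^2(\mu,\nu)$. Choose these independent over $r$ and independent of $(A_1,\dots,A_K,b)$. Then $\ell_2^2(T\mu,T\nu)\le \E\Vert\sum_r A_r(Y^{(r)}-Z^{(r)})\Vert^2$. Expanding the square with the scalar product, each mixed term $r\ne s$ equals
\[
\E\langle A_s^\ast A_r (Y^{(r)}-Z^{(r)}),\,Y^{(s)}-Z^{(s)}\rangle .
\]
Conditioning on the coefficients and on $(Y^{(r)},Z^{(r)})$, this vanishes by independence and centering of $Y^{(s)}-Z^{(s)}$. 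Each diagonal term equals
\[
\E\langle A_r^\ast A_r D_r,D_r\rangle\le \E\Vert A_r^\ast A_r\Vert_{\op}\,\ell_2^2(\mu,\nu), \qquad D_r:=Y^{(r)}-Z^{(r)},
\]
using independence of $A_r$ and $D_r$. This is exactly why condition (B) is stated with the adjoint. Hence $T$ is a contraction with Lipschitz constant $\xi:=(\sum_r\E\Vert A_r^\ast A_r\Vert_{\op})^{1/2}<1$. Since $(\mathcal{M}^2_{0,2},\ell_2)$ is complete (all norms on $\R^2$ are equivalent, so completeness transfers from the Euclidean case), Banach's fixed point theorem yields the unique fixed point $\mathcal{L}(X)$.

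\emph{Step 2: convergence $\ell_2(X_n,X)\to0$.} Set $d_n:=\ell_2(X_n,X)$ and write
\[
X=\sum_r A_rX^{(r)}+b .
\]
Couple $X^{(r)}_{I_r^{(n)}}$ and $X^{(r)}$ optimally, conditionally on $I^{(n)}=(i_1,\dots,i_K)$, so that $\E\Vert X^{(r)}_{j}-X^{(r)}\Vert^2=d_j^2$. Couple $(A^{(n)},b^{(n)})$ with $(A,b)$ optimally according to (A). Then
\[
X_n-X \eqlaw \sum_r A_r^{(n)}\big(X^{(r)}_{I_r^{(n)}}-X^{(r)}\big)+\sum_r\big(A_r^{(n)}-A_r\big)X^{(r)}+\big(b^{(n)}-b\big).
\]
The last two sums have second moments tending to $0$. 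This uses (A), independence of $X^{(r)}$ from the coefficients, and finite second moments of $X$; the random-matrix term is controlled by $\E\Vert A_r^{(n)}-A_r\Vert_{\op}^2\,\E\Vert X\Vert^2$. For the first sum, the same orthogonality argument as in Step 1 applies, since all differences are centered and independent given the coefficients. It gives
\[
d_n^2\le \E\Big[\sum_r\Vert A_r^{(n),\ast}A_r^{(n)}\Vert_{\op}\, d^2_{I_r^{(n)}}\Big]+o(1)+2\big(\cdots\big)^{1/2}o(1)^{1/2},
\]
where the cross terms are handled by Cauchy--Schwarz in the Minkowski form, that is, by working with $d_n$ rather than $d_n^2$.

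\emph{Step 3: conclude with the standard two-stage argument.}

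\textbf{Boundedness.} Suppose $d_j\le C$ for $j<n$ with $C$ large. The event $\{I_r^{(n)}=n\}$ carries the term $d_n$ itself. By (C) its weight $p_n:=\sum_r\E[\eins_{\{I_r^{(n)}=n\}}\Vert A_r^{(n),\ast}A_r^{(n)}\Vert_{\op}]$ tends to $0$. Solving the inequality for $d_n$ gives $d_n^2(1-p_n)\le C^2\big(\sum_r\E\Vert A_r^{(n),\ast}A_r^{(n)}\Vert_{\op}\big)+o(1)\,C$. By (A), $\sum_r\E\Vert A_r^{(n),\ast}A_r^{(n)}\Vert_{\op}\to\xi^2<1$. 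Hence $d_n\le C$ for large $n$, which gives $\sup_n d_n<\infty$.

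\textbf{Limit.} Let $\eta:=\limsup d_n$. Given $\varepsilon>0$, choose $\ell$ with $d_j\le\eta+\varepsilon$ for $j>\ell$. Split the expectation according to whether $I_r^{(n)}\le\ell$ or $I_r^{(n)}=n$, or neither. By (C) the exceptional events have vanishing weight and the $d_j$ there are bounded. This yields $\eta^2\le\xi^2(\eta+\varepsilon)^2$, hence $\eta=0$.

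The main obstacle is keeping the cross-term cancellation valid with a non-Euclidean scalar product and random matrices. We need orthogonality to be conditional on the coefficients, and the operator norm to be taken for $A^\ast A$ with respect to $\langle\cdot,\cdot\rangle$ rather than the transpose. Once the conditional centering is written carefully, the rest mirrors \cite{Neininger2001}.
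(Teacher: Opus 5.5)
Your proposal is correct and follows the same route as the paper. The paper simply invokes the proofs of \cite[Lemma 3.1, Theorem 4.1]{Neininger2001} with the adjoint $A_r^\ast$ in place of the transpose, and you have written out exactly those details: conditional orthogonality of the cross terms, the bound $\langle A_r^\ast A_r D_r, D_r\rangle \le \Vert A_r^\ast A_r\Vert_{\op}\Vert D_r\Vert^2$, boundedness of $d_n$, and then the $\limsup$ argument.

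One cosmetic point concerns the boundedness step, where your squared inequality hides a term of order $\epsilon_n\sqrt{p_n}\,d_n$ inside the ``$o(1)C$''. Here $\epsilon_n\to0$ denotes the $L^2$-norm of the coefficient-error terms. It is cleaner not to square the Minkowski bound: with $s_n:=\sum_r\E\Vert A_r^{(n),\ast}A_r^{(n)}\Vert_{\op}$, you get $d_n\le \sqrt{p_n}\,d_n+C\sqrt{s_n}+\epsilon_n$. This directly gives $d_n\le (C\sqrt{s_n}+\epsilon_n)/(1-\sqrt{p_n})\le C$ for $n$ beyond a threshold that is chosen before $C$.
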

\begin{proof}
Note that the map is well-defined since it preserves second moments by independence and centering, as $\E[b]=0$ by property (A). To show that the map $T$ has a unique fixed point, we must show that $T$ is a contraction with respect to the $\ell_2$-metric. 
Following the proof of \cite[Lemma 3.1]{Neininger2001}, 
for $\mu,\nu \in \mathcal{M}^2_{0,2}$, we obtain
\begin{align}
    \ell_2^2\big(T(\mu),T(\nu)\big) &\le \sum_{r=1}^K \E\big[\big\langle A_r\left(Y^{(r)}-Z^{(r)}\big),A_r\big(Y^{(r)}-Z^{(r)}\big)\big\rangle\right]\nonumber\\
    &=\sum_{r=1}^K \E\big[\big\langle Y^{(r)}-Z^{(r)},A_r^\ast A_r \big(Y^{(r)}-Z^{(r)}\big) \big\rangle\big] , \label{last:_step}
\end{align}
where $(Y^{(1)},Z^{(1)}),\dots,(Y^{(K)},Z^{(K)})$ are optimal couplings for $\mu$ and $\nu$ such that $(A_1,\dots,A_K,b),(Y^{(1)},Z^{(1)}),\dots,(Y^{(K)},Z^{(K)})$ are independent (note that this is always possible in the 
$\ell_2$-metric).  In (9) of~\cite{Neininger2001}  
the transpose matrix is used, but in our setting this is exactly the place where the adjoint matrix has to be used. Condition (B) ensures that we get $\ell_2(T(\mu), T(\nu)) \le c \ell_2(\mu,\nu)$, for some constant $c<1$, which shows the existence of a unique fixed point of 
$T$ in $\mathcal{M}^2_{0,2}$.

To prove $\ell_2(X_n,X) \to 0$, we can also follow the proof of~\cite[Theorem 4.1]{Neininger2001} and use the properties of $A_r^\ast$ and $A_r^{(n),\ast}$ at the instances where in the original proof the transpose matrix is used.
\end{proof}
Readers interested in the proof techniques are referred to \cite{Neininger2001}.
\subsubsection{A Pólya urn model for the internal time within the recursive parts}
\label{polya_urn}
We consider a Pólya urn consisting of $m+1$ colors. Let $J^{(n)}=(J_1^{(n)},\dots,J_{m+1}^{(n)})$ denote the urn composition after $n$ draws with initial composition $J^{(0)}=(1,\dots,1)$ and replacement matrix $R=mI$, where $I$ denotes the identity matrix. This implies that, if a ball of color $i \in \{1,\dots,m+1\}$ is chosen, it is returned to the urn  
alongside $m$ balls of the same color. See \cite{Mahmoud2009} for an exposition of Pólya urns. The following convergence result for P\'olya urns  is well known. 
 We use a version of P\'olya urns adapted to the need of the analysis 
of $m$-ary recursive trees.
\begin{lemma}\label{pol_conv}
   For $n \to \infty$, we have
\begin{align}\label{convergence:polya_urn}
\Big(\frac{J_1^{(n-2)}}{m(n-1)+1},\dots,\frac{J_{m+1}^{(n-2)}}{m(n-1)+1}\Big) \almostsure (V_1,\dots,V_{m+1}),
\end{align}
where $(V_1,\dots,V_{m+1})$ is \Dirichlet$(1/m,\dots,1/m)$-distributed. 
\end{lemma}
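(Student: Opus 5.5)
The plan is to use the classical martingale/exchangeability argument for Pólya urns with diagonal replacement matrix. After $n-2$ draws, each adding $m$ balls, the urn has total $(m+1)+m(n-2)=m(n-1)+1$ balls. This is exactly the normalizing denominator, so the vector in (\ref{convergence:polya_urn}) is the vector of color proportions after $n-2$ draws.

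\emph{Step 1 (martingale).} Let $\mathcal{G}_k$ be the history of the first $k$ draws. For each color $i$, set $M_i^{(k)}:=J_i^{(k)}/(m(k+1)+1)$. Then
\[
\E\big[J_i^{(k+1)}\given \mathcal{G}_k\big]=J_i^{(k)}+m\,\frac{J_i^{(k)}}{m(k+1)+1}=J_i^{(k)}\,\frac{m(k+2)+1}{m(k+1)+1},
\]
so $M_i^{(k)}$ is a martingale. Since it takes values in $[0,1]$, the martingale convergence theorem gives almost sure convergence of $M_i^{(k)}$ for every $i$. Hence the whole vector converges almost surely to some random vector $(V_1,\dots,V_{m+1})$ on the simplex. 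Reindexing with $k=n-2$ gives the left-hand side of (\ref{convergence:polya_urn}).

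\emph{Step 2 (identifying the limit).} I would identify the law of the limit as Dirichlet$(1/m,\dots,1/m)$. Dividing all ball counts by $m$, the urn is equivalent to a standard Pólya urn with initial weights $1/m$ per color and unit reinforcement. Real-valued weights are allowed here, since drawing probabilities are proportional to weights. The draw sequence is exchangeable, and the probability of any particular sequence with $k_i$ draws of color $i$ is
\[
\frac{\prod_{i=1}^{m+1}(1/m)(1/m+1)\cdots(1/m+k_i-1)}{\big(\tfrac{m+1}{m}\big)\big(\tfrac{m+1}{m}+1\big)\cdots\big(\tfrac{m+1}{m}+k-1\big)},
\]
where $k=\sum_i k_i$. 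This equals $\E\big[\prod_i V_i^{k_i}\big]$ for $V\sim$ Dirichlet$(1/m,\dots,1/m)$, by the standard Dirichlet moment formula. By de Finetti's theorem, the draws are conditionally i.i.d.\ given the limiting proportions, with mixing measure Dirichlet$(1/m,\dots,1/m)$. Equivalently, the bounded proportions converge in moments, and the mixed moments $\E\big[\prod_i (M_i^{(k)})^{k_i}\big]$ converge to those Dirichlet moments. Since the limit is compactly supported, its law is determined by its moments.

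\emph{Main obstacle.} The only delicate point is the bookkeeping in the moment computation, namely the rising factorials with non-integer start $1/m$. Rescaling by $m$ avoids this, and one could simply cite the known result on Pólya urns (e.g., \cite{Mahmoud2009}) at that point.
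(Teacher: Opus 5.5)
Your proposal is correct and uses the same reduction as the paper: divide the ball counts by $m$ to get a classical P\'olya urn with initial weight $1/m$ per color and identity replacement, whose color proportions converge almost surely to a Dirichlet$(1/m,\dots,1/m)$ vector. The paper simply cites this classical fact (Athreya 1969; Knape--Neininger 2014), whereas you prove it yourself, using the bounded-martingale argument for almost sure convergence and exchangeability with de Finetti (or mixed moments) to identify the limit law; this is a valid self-contained replacement for the citation.
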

 \begin{proof}
     For the scaled process $\tilde{J}^{(n)}:=J^{(n)}/m$, we have $\tilde{J}^{(0)}=
     (1/m,\dots,1/m)$ and the replacement matrix $\tilde{R}=I$. The statement is then covered by the literature; see \cite[Corollary 1]{ath69} or \cite[Lemma 2.1]{knne14}.
 \end{proof} 
 \noindent To transfer Lemma \ref{pol_conv} to the internal time $I_i^{(n)}$ of the subtrees, note that we have $J_i^{(n-2)}=1+m(I_i^{(n)}-1)$, 
 since drawing a ball of color $i$ is equivalent to choosing a node in the $i$th subtree. The index shift occurs since there are only $(n-1)$ steps to obtain an $m$-ary recursive tree with the highest index $n$. 
 Therefore, using \eqref{convergence:polya_urn}, we obtain almost surely
\begin{align*}
    \Big(\frac{I_1^{(n)}-1}{n},\dots,\frac{I_{m+1}^{(n)}-1}{n}\Big)&=\Big(\frac{J_1^{(n-2)}-1}{mn},\dots,\frac{J_{m+1}^{(n-2)}-1}{mn}\Big)\to (V_1,\dots,V_{m+1}),
\end{align*}
which directly yields $(I_1^{(n)}/n,\dots,I_{m+1}^{(n)}/n) \to (V_1,\dots, V_{m+1})$ almost surely.
\subsubsection{Proof of Theorem \ref{thm_1}} \label{sec_proof_5.1}
The random vector $(W_n^\ast,P_n^\ast)^T$ satisfies recurrence \eqref{recurrence} with
\begin{align*} 
    A_r^{(n)}&=\begin{pmatrix} 1/n^2 & 0\\ 0 & 1/n \end{pmatrix} A_r(n) \begin{pmatrix} \big(I_r^{(n)}\big)^2 & 0\\ 0 & I_r^{(n)} \end{pmatrix}\\
    &= \begin{pmatrix} \big(I_r^{(n)}/n\big)^2 & mI_r^{(n)}\big(n-I_r^{(n)}\big)/n^2 \\ 0 & I_r^{(n)}/n \end{pmatrix}. 
\end{align*}
The normalized toll term $b^{(n)}$ is given by
\begin{align*} 
    \begin{pmatrix} b_1^{(n)}\\ b_2^{(n)} \end{pmatrix}&=\begin{pmatrix} 1/n^2 & 0 \\ 0 & 1/n \end{pmatrix}\left(\sum_{r=1}^{m+1}\begin{pmatrix} 1 & m\big(n-I_r^{(n)}\big) \\ 0 & 1 \end{pmatrix}\begin{pmatrix} \alpha_{I_r^{(n)}}\\ \gamma_{I_r^{(n)}} \end{pmatrix}\right.\\
   &\left. 
    ~\qquad\qquad\qquad\qquad-\begin{pmatrix} \alpha_n \\ \gamma_n \end{pmatrix}+\begin{pmatrix} b_1(n)\\ b_2(n) \end{pmatrix}\right). 
\end{align*}
To verify condition (A), we first identify the limiting matrices $A_r$ and the vector $b$.
\begin{lemma} 
    We have
    \begin{align}\label{limit:A_r} 
        A_r^{(n)} \overset{\ell_2}{\longrightarrow} \begin{pmatrix} V_r^2 
             & mV_r(1-V_r)\\ 0 & V_r \end{pmatrix}=: A_r,
    \end{align} 
    and
    \begin{align}\label{limit:b} 
        \begin{pmatrix} b_1^{(n)}\\ b_2^{(n)}\end{pmatrix} \overset{\ell_2}{\longrightarrow} \begin{pmatrix} b_1\\ b_2 \end{pmatrix}=: b,
    \end{align} 
    with 
$$    b_1=m^2\Big(\sum_{r=1}^{m+1} V_r(\ln(V_r)-V_r)+\sum_{r=1}^m V_r(1-V_r)+1\Big),$$
    and 
$$b_2=m\Big(\sum_{r=1}^{m+1} V_r\ln(V_r)+1-V_{m+1}\Big),$$
    where $(V_1,\dots,V_{m+1})$ is \Dirichlet$(1/m,\dots,1/m)$ distributed.
\end{lemma}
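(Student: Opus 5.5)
The plan is to prove almost sure convergence of all the quantities and then upgrade it to $\ell_2$-convergence by dominated convergence, exploiting that everything is uniformly bounded. By Lemma~\ref{pol_conv} and the discussion after it, $I_r^{(n)}/n \to V_r$ almost surely, for $r=1,\dots,m+1$. We also have $1\le I_r^{(n)}\le n$.

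\emph{The matrices.} The entries of $A_r^{(n)}$ are $(I_r^{(n)}/n)^2$, $m(I_r^{(n)}/n)(1-I_r^{(n)}/n)$ and $I_r^{(n)}/n$. These are continuous functions of $I_r^{(n)}/n\in[0,1]$, so they converge almost surely to the corresponding entries of $A_r$ and are bounded by the constant $\max(1,m)$. Dominated convergence gives $\E\|A_r^{(n)}-A_r\|^2\to0$. Realizing $(A_r^{(n)})_r$ and $(A_r)_r$ on the same probability space, namely the urn together with its almost sure limit, this yields the $\ell_2$-convergence in \eqref{limit:A_r}, and in fact joint $\ell_2$-convergence with $b^{(n)}$.

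\emph{The toll vector, second component.} Write $\gamma_i = mi\ln i - m\psi(\frac{m+1}m)i + \varepsilon_i$ with $\varepsilon_i=o(i)$ (Corollary~\ref{Cor:IPL}). Then $\sup_{i\le n}|\varepsilon_i|/n\to0$, which handles all error terms uniformly in the random indices. Using $\sum_r I_r^{(n)} = n+m-1$, the linear $\psi$-terms cancel up to $O(1/n)$. Splitting $\ln I_r^{(n)} = \ln(I_r^{(n)}/n)+\ln n$, the $\ln n$ contributions $m\ln n\,(n+m-1)-mn\ln n$ are $O(\ln n)$, hence vanish after division by $n$. What remains is
\[
b_2^{(n)} = m\sum_{r=1}^{m+1}\frac{I_r^{(n)}}n\ln\frac{I_r^{(n)}}n + m\Big(1-\frac{I_{m+1}^{(n)}}n\Big)+o(1),
\]
where the $o(1)$ is deterministic. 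Since $x\ln x$ is bounded and continuous on $[0,1]$, this converges almost surely, and boundedly, to $b_2$.

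\emph{The toll vector, first component.} Proceed the same way with $\alpha_i = m^2i^2\ln i - m^2(1+\psi)i^2+o(i^2)$ and with the cross terms $m(n-I_r^{(n)})\gamma_{I_r^{(n)}}/n^2$. Divide by $n^2$ and expand $\ln I_r^{(n)}=\ln(I_r^{(n)}/n)+\ln n$. The quantity $b_1(n)/n^2$ converges to
\[
m^2\Big(2\sum_{i<j\le m}V_iV_j+\sum_{k\le m}V_kV_{m+1}\Big),
\]
and the error terms are again uniformly $o(1)$. The main obstacle is the bookkeeping showing that all terms of order $\ln n$ cancel. Their total coefficient is
\[
m^2\Big(\sum_r V_r^2 + \sum_r V_r(1-V_r) - 1\Big)+o(1),
\]
which vanishes because $\sum_r V_r=1$ in the limit. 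The finite-$n$ deviation of $\sum_r I_r^{(n)}$ from $n$ only contributes $O(\ln n/n)$.

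The same cancellation must be tracked for the $\psi$-terms. The $\alpha$-parts give $-m^2(1+\psi)(\sum_r V_r^2-1)$, and the $\gamma$-parts give $-m^2\psi\sum_r V_r(1-V_r)$. These combine, via $\sum_r V_r(1-V_r) = 1-\sum_r V_r^2$, to $m^2(1-\sum_r V_r^2)$, which is free of $\psi$. The remaining terms are
\[
m^2\sum_r V_r^2\ln V_r + m^2\sum_r V_r(1-V_r)\ln V_r = m^2\sum_r V_r\ln V_r .
\]
Finally, rewrite $2\sum_{i<j\le m}V_iV_j+\sum_k V_kV_{m+1}$ in terms of $\sum_r V_r^2$ and $V_{m+1}(1-V_{m+1})$, again using $\sum_r V_r=1$, and collect everything into the stated expression for $b_1$. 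All pieces are bounded continuous functions of $(I_r^{(n)}/n)_r$ plus deterministic $o(1)$ terms, so dominated convergence gives the $\ell_2$-convergence \eqref{limit:b}.
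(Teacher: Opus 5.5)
Your proposal is correct and follows essentially the same route as the paper: insert the expansions of $\alpha_i$ and $\gamma_i$, write $A_r^{(n)}$ and $b^{(n)}$ as bounded continuous functions of $(I_r^{(n)}/n)_r$ plus deterministic $o(1)$ terms, and conclude from the almost sure P\'olya-urn limit and dominated convergence. Your observation that $\sup_{i\le n}|\varepsilon_i|/n\to0$, which makes the error terms uniform in the random indices, spells out a point the paper leaves implicit. The one thing to tighten is the phrasing of the $\ln n$ cancellation. It must be carried out exactly at finite $n$, via $x^2+x(1-x)=x$ with $x=I_r^{(n)}/n$ and $\sum_r I_r^{(n)}=n+m-1$, which gives the coefficient $m^2(m-1)/n$; it should not be done ``in the limit'' with an $o(1)$ error, since $\ln n$ times an unspecified $o(1)$ need not vanish. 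Your remark about the $O(\ln n/n)$ deviation shows you have this right.
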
 
\begin{proof} 
To simplify notation, let $I_r$ denote $I_r^{(n)}$. Expanding the equation for $b^{(n)}$ yields
    \begin{align} \label{final_term}
        b_1^{(n)}=\frac{1}{n^2}\Big(\sum_{r=1}^{m+1} \big( \alpha_{I_r}+m\left(n-I_r\big)\gamma_{I_r}\right)-\alpha_n+b_1(n)\Big). 
    \end{align} 
Applying the asymptotic expansions of $\alpha_n$ and $\gamma_n$ and setting $0\ln(0):=0$, we get
\begin{align*} 
  \frac{1}{n^2}\sum_{r=1}^{m+1} \alpha_{I_r}
        &=\frac{1}{n^2}\Big(\sum_{r=1}^{m+1}\Big(m^2I_r^2\ln(I_r)-m^2\Big(1+                   
                     \psi\Big(\frac{m+1}{m}\Big)\Big)I_r^2\Big)+o(n^2) \Big)\\ 
       &= m^2\sum_{r=1}^{m+1}\Big(\Big(\frac{I_r}{n}\Big)^2\ln(I_r)
           -\Big(1+\psi\Big(\frac{m+1}{m}\Big)\Big)\Big(\frac{I_r}{n}\Big)^2\Big)+o(1),
\end{align*} 
and
\begin{align*} 
 &\frac{1}{n^2}\sum_{r=1}^{m+1} m(n-I_r) \gamma_{I_r}\\
  & \qquad =m^2\sum_{r=1}^{m+1}\Big(\frac{(n-I_r)I_r}{n^2}\ln(I_r)-\psi\Big(\frac{m+1}{m}\Big) \frac{(n-I_r)I_r}{n^2}\Big)+o(1). 
\end{align*} 
Combining these results, we obtain
\begin{align*} 
 \lefteqn{   \frac{1}{n^2}\sum_{r=1}^{m+1} \Big(\alpha_{I_r}+m(n-I_r)\gamma_{I_r}\Big)}
     \\
     &\qquad =m^2\sum_{r=1}^{m+1} \Big(\frac{I_r}{n} \ln(I_r)-\psi\Big(\frac{m+1}{m}\Big)\frac{I_r}{n}-\Big(\frac{I_r}{n}\Big)^2\Big)+o(1), 
\end{align*} 
which implies
\begin{align*} 
 \lefteqn{   \frac{1}{n^2}\Big(\sum_{r=1}^{m+1} \Big(\alpha_{I_r}+m(n-I_r)\gamma_{I_r}\Big)-\alpha_n\Big)}\\
 &\qquad =m^2\Big(\sum_{r=1}^{m+1}  
        \Big(\frac{I_r}{n}\Big(\ln\Big(\frac{I_r}{n}\Big)-\frac{I_r}{n}\Big)\Big)+1\Big)+o(1). 
\end{align*} 
For the summand $b_1(n)$ in (\ref{final_term}), using $\sum_{r=1}^{m+1} I_r=n+m-1$, we have
\begin{align*} 
    \frac{b_1(n)}{n^2}&=m^2\Big(2\sum_{1 \le i<j \le m}\frac{I_i I_j}{n^2}+\sum_{r=1}^m \frac{I_r I_{m+1}}{n^2}\Big)+o(1)\\ 
    &=m^2\Big(\Big(\sum_{r=1}^m \frac{I_r}{n}\Big)^2-\sum_{r=1}^m \Big(\frac{I_r}{n}\Big)^2+\frac{I_{m+1}}{n}\sum_{r=1}^m \frac{I_r}{n}\Big)+o(1)\\ 
    &=m^2\sum_{r=1}^{m}\Big(\frac{n+m-1}{n} \frac{I_r}{n}-\Big(\frac{I_r}{n}\Big)^2\Big)+o(1)\\ 
    &=m^2\sum_{r=1}^{m} \Big(\frac{I_r}{n}\Big(\frac{n+m-1}{n}-\frac{I_r}{n}\Big)\Big)+o(1). 
\end{align*} 
The second component $b_2^{(n)}$ is computed as
\begin{align*} 
    b_2^{(n)}&=\frac{1}{n}\Big(\sum_{r=1}^{m+1} mI_r\ln(I_r)-mn\ln(n)+m(n-I_{m+1})+o(n) \Big)\\ 
    &=m\Big(\sum_{r=1}^{m+1} \frac{I_r}{n}\ln\Big(\frac{I_r}{n}\Big)+1-\frac{I_{m+1}}{n}\Big)+o(1). 
\end{align*} 
Since $(I_1/n,\dots,I_{m+1}/n)$ converges almost surely to $(V_1,\dots,V_{m+1})$,
 where $(V_1,\dots,V_{m+1})$ follows a \Dirichlet$(1/m,\dots,1/m)$ distribution, see Section \ref{polya_urn}, it follows that
\begin{align*} 
    A_r^{(n)} \overset{\ell_2}{\longrightarrow} \begin{pmatrix} V_r^2 & mV_r(1-V_r)\\ 0 & V_r \end{pmatrix}=:A_r,
\end{align*} 
as well as
\begin{align*} 
   b_1^{(n)} \overset{\ell_2}{\longrightarrow} m^2\Big(\sum_{r=1}^{m+1} V_r \big(\ln(V_r)-V_r\big)+\sum_{r=1}^m V_r (1-V_r)+1\Big),
\end{align*} 
and
\begin{align*} 
    b_2^{(n)}\overset{\ell_2}{\longrightarrow}m\Big(\sum_{r=1}^{m+1} V_r \ln(V_r)+1-V_{m+1}\Big), 
\end{align*} 
by the dominated convergence theorem, since $x \mapsto x\ln(x)$ is bounded on $[0,1]$ and all entries of $A_r^{(n)}$ are uniformly bounded. 
\end{proof} 
To verify the contraction condition (B), we use a weighted norm 
\begin{align}\label{lambda_norm}
\Vert x \Vert_\lambda:=\sqrt{x_1^2+\lambda x_2^2}\, ,
\end{align}
with $\lambda>0$. To perform the computations, we 
need the following lemma to relate the operator norm with respect to $\Vert \cdot \Vert_\lambda$ to the standard operator norm. 
In what follows, we denote the operator norm with respect to $\Vert \cdot \Vert_\lambda$ by $\Vert \cdot \Vert_{\op,\lambda}$.
\begin{lemma}\label{normshift} 
  We have $$\Vert A_r^\ast A_r \Vert_{\op,\lambda}=\big\Vert A_r^{[\lambda]}\big\Vert_{\op}^2,$$ where $\Vert \cdot \Vert_{\op}$ denotes the operator norm with respect to the standard Euclidean norm and
  \begin{align*} 
    A_r^{[\lambda]}=\begin{pmatrix} V_r^2 & mV_r(1-V_r)/\sqrt{\lambda}\, \\ 0 & V_r \end{pmatrix}. 
  \end{align*} 
\end{lemma}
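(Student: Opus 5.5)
The plan is to realize the weighted inner product as a change of coordinates. Let $D_\lambda=\mathrm{diag}(1,\sqrt{\lambda})$. Then $\langle x,y\rangle_\lambda = x_1y_1+\lambda x_2y_2 = \langle D_\lambda x, D_\lambda y\rangle$, where the right-hand side is the Euclidean scalar product, and $\Vert x\Vert_\lambda=\Vert D_\lambda x\Vert$. So $D_\lambda$ is an isometry from $(\R^2,\Vert\cdot\Vert_\lambda)$ onto $(\R^2,\Vert\cdot\Vert)$.

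\textbf{Step 1 (conjugation).} For any matrix $B$, substituting $x=D_\lambda^{-1}y$ gives
$$\Vert B\Vert_{\op,\lambda}=\sup_{x\ne0}\frac{\Vert D_\lambda Bx\Vert}{\Vert D_\lambda x\Vert}=\Vert D_\lambda B D_\lambda^{-1}\Vert_{\op}.$$

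\textbf{Step 2 (adjoint).} The adjoint with respect to $\langle\cdot,\cdot\rangle_\lambda$ must satisfy $\langle Ax,y\rangle_\lambda=\langle x,A^\ast y\rangle_\lambda$. Writing both sides in Euclidean form and comparing yields
$$A^\ast=D_\lambda^{-2}A^TD_\lambda^2 .$$
Therefore
$$D_\lambda A^\ast A D_\lambda^{-1}=D_\lambda^{-1}A^TD_\lambda\,D_\lambda A D_\lambda^{-1}=\big(D_\lambda A D_\lambda^{-1}\big)^T\big(D_\lambda A D_\lambda^{-1}\big).$$

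\textbf{Step 3 (conclusion).} Set $M:=D_\lambda A_rD_\lambda^{-1}$. By Step 1, $\Vert A_r^\ast A_r\Vert_{\op,\lambda}=\Vert M^TM\Vert_{\op}$, and the Euclidean identity $\Vert M^TM\Vert_{\op}=\Vert M\Vert_{\op}^2$ (largest eigenvalue of $M^TM$ is the square of the largest singular value of $M$) finishes the argument once $M$ is identified.

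Computing $M$ entrywise, $M_{ij}=d_i(A_r)_{ij}/d_j$ with $d=(1,\sqrt\lambda)$. This gives
$$M=\begin{pmatrix}V_r^2 & mV_r(1-V_r)/\sqrt\lambda\\ 0 & V_r\end{pmatrix}=A_r^{[\lambda]},$$
as claimed.

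The only point needing care is the direction of the scaling in the off-diagonal entry. It is $d_1/d_2=1/\sqrt\lambda$ rather than $\sqrt\lambda$, and this follows from the convention $\Vert x\Vert_\lambda^2=x_1^2+\lambda x_2^2$ (the weight sits on the second coordinate). Everything else is routine linear algebra.
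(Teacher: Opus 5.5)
Your proposal is correct and follows the paper's proof essentially line by line: you realize $\Vert\cdot\Vert_\lambda$ via $D_\lambda=\mathrm{diag}(1,\sqrt\lambda)$, conjugate to get $\Vert B\Vert_{\op,\lambda}=\Vert D_\lambda BD_\lambda^{-1}\Vert_{\op}$, identify the adjoint as $D_\lambda^{-2}A_r^TD_\lambda^2$ (the paper's $G_\lambda^{-1}A_r^TG_\lambda$), factor $D_\lambda A_r^\ast A_rD_\lambda^{-1}=(A_r^{[\lambda]})^TA_r^{[\lambda]}$, and finish with $\Vert M^TM\Vert_{\op}=\Vert M\Vert_{\op}^2$. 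Your entrywise check that the off-diagonal entry is scaled by $1/\sqrt\lambda$, with the lower-left entry staying $0$, is a small detail the paper leaves implicit.
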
 
\begin{proof} 
    Consider the matrices 
    $$G_\lambda=\begin{pmatrix} 1 & 0 \\ 0 & \lambda \end{pmatrix}\quad\mbox{and}\quad D_\lambda=G_\lambda^{1/2}=\begin{pmatrix} 1 & 0 \\ 0 & \sqrt{\lambda} \end{pmatrix}.$$ Let $\Vert \cdot \Vert$ be the standard Euclidean norm. Then, for all $x \in \R^2$, we have    
    $\Vert x \Vert_\lambda=\Vert D_\lambda x\Vert$. For any matrix $B$, this implies
    \begin{align*} 
        \Vert B \Vert_{\op,\lambda}=\sup_{x \neq 0} \frac{\Vert Bx \Vert_\lambda}{\Vert x \Vert_\lambda}=\sup_{x \neq 0}\frac{\Vert D_\lambda Bx\Vert}{\Vert D_\lambda x \Vert}=\sup_{y \neq 0}\frac{\Vert D_\lambda B D_\lambda^{-1}y\Vert}{\Vert y \Vert}=\Vert D_\lambda B D_\lambda^{-1}\Vert_{\op},
    \end{align*} 
    where $y=D_\lambda x$. The adjoint matrix $A_r^\ast$ of $A_r$ with respect to $\Vert \cdot \Vert_\lambda$ is given by $A_r^\ast=G_\lambda^{-1} A_r^T G_\lambda$, where $A^T$ is the transpose. Thus, $D_\lambda A_r^\ast D_\lambda^{-1}=D_\lambda^{-1} A_r^T D_\lambda=(A_r^{[\lambda]})^T$, since $A_r^{[\lambda]}=D_\lambda A_r D_\lambda^{-1}$. Combining these results, we obtain
    \begin{align*} 
        D_\lambda A_r^\ast A_r D_\lambda^{-1}=(D_\lambda A_r^\ast D_\lambda^{-1})(D_\lambda A_r D_\lambda^{-1})=(A_r^{[\lambda]})^T A_r^{[\lambda]} .
    \end{align*} 
 Therefore, $\Vert A_r^\ast A_r \Vert_{\op,\lambda}=\Vert D_\lambda A_r^\ast A_r D_\lambda^{-1}\Vert_{\op}=\Vert (A_r^{[\lambda]})^T A_r^{[\lambda]}\Vert_{\op}$. The assertion follows from the property that for all real matrices $M$, $\Vert M^T M \Vert_{\op}=\Vert M \Vert_{\op}^2$. 
\end{proof} 
Now we are able to show that there exists a weighted norm $\Vert \cdot \Vert_\lambda$ such that the contraction condition (B) is fulfilled.
\begin{lemma}\label{contraction} 
    There exists a $\lambda>0$ such that
$$ \mathbb{E}\Big[\sum_{r=1}^{m+1} \Vert A_r^\ast A_r \Vert_{\op,\lambda}\Big]<1. $$
\end{lemma}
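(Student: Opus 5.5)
The plan is to reduce the claim, via Lemma~\ref{normshift}, to a bound on Euclidean operator norms of explicit upper triangular matrices. Then we choose $\lambda$ large, so that the off-diagonal entry of $A_r^{[\lambda]}$ becomes negligible. By Lemma~\ref{normshift},
\begin{align*}
\E\Big[\sum_{r=1}^{m+1}\Vert A_r^\ast A_r\Vert_{\op,\lambda}\Big]=\sum_{r=1}^{m+1}\E\Big[\big\Vert A_r^{[\lambda]}\big\Vert_{\op}^2\Big].
\end{align*}
It therefore suffices to control $\Vert A_r^{[\lambda]}\Vert_{\op}$ pointwise by a quantity whose expectation is explicit.

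First, split $A_r^{[\lambda]}$ into its diagonal part $\mathrm{diag}(V_r^2,V_r)$ and its strictly upper triangular part, whose only entry is $mV_r(1-V_r)/\sqrt{\lambda}$. Since $0\le V_r\le 1$, the diagonal part has operator norm $\max(V_r^2,V_r)=V_r$. The nilpotent part has operator norm $mV_r(1-V_r)/\sqrt{\lambda}\le mV_r/\sqrt{\lambda}$. The triangle inequality for $\Vert\cdot\Vert_{\op}$ then gives
\begin{align*}
\big\Vert A_r^{[\lambda]}\big\Vert_{\op}^2\le V_r^2\Big(1+\frac{m}{\sqrt{\lambda}}\Big)^2 .
\end{align*}
A crude bound such as the Frobenius norm would not suffice here. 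For $m=1$ it produces $V_r^4+V_r^2$, and the expected sum of these already exceeds $1$; this is why we exploit that the diagonal part alone contributes only $V_r^2$.

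Next, compute $\E[V_r^2]$. The marginal of a Dirichlet$(1/m,\dots,1/m)$ vector with $m+1$ components is $V_r\sim\mathrm{Beta}(1/m,1)$. Hence
\begin{align*}
\E[V_r^2]=\frac{\frac1m\big(\frac1m+1\big)}{\big(\frac1m+1\big)\big(\frac1m+2\big)}=\frac{1}{2m+1},
\qquad
\sum_{r=1}^{m+1}\E[V_r^2]=\frac{m+1}{2m+1}<1 .
\end{align*}
Consequently,
\begin{align*}
\E\Big[\sum_{r=1}^{m+1}\Vert A_r^\ast A_r\Vert_{\op,\lambda}\Big]\le\Big(1+\frac{m}{\sqrt\lambda}\Big)^2\frac{m+1}{2m+1}.
\end{align*}
Since $(m+1)/(2m+1)<1$, this bound is below $1$ for all sufficiently large $\lambda$; any $\lambda$ with $\big(1+m/\sqrt{\lambda}\big)^2<(2m+1)/(m+1)$ works.

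The only step requiring care is the pointwise norm bound. One must use the operator norm of the diagonal part, which equals $V_r$, rather than the sum of squares of all entries. With that choice the argument is short and gives an explicit admissible $\lambda$.
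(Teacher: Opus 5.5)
Your proof is correct and follows the paper's route: reduce via Lemma~\ref{normshift} to $\sum_{r}\E[\Vert A_r^{[\lambda]}\Vert_{\op}^2]$, make the off-diagonal entry negligible by taking $\lambda$ large, and use that the diagonal part contributes $V_r^2$, with $\sum_{r=1}^{m+1}\E[V_r^2]=(m+1)/(2m+1)<1$. The only difference is that the paper argues qualitatively, via continuity of the operator norm and dominated convergence as $\lambda\to\infty$, whereas your triangle-inequality bound $\Vert A_r^{[\lambda]}\Vert_{\op}\le V_r(1+m/\sqrt{\lambda})$ makes this step quantitative and gives an explicit admissible $\lambda$.
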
 
\begin{proof} 
Using Lemma \ref{normshift}, we have 
\begin{align*} 
        \mathbb{E}\Big[\sum_{r=1}^{m+1} \Vert A_r^\ast A_r \Vert_{\op,\lambda}\Big]=\sum_{r=1}^{m+1} \E\Big[\Big\Vert A_r^{[\lambda]} \Big\Vert_{\op}^2\Big]. 
\end{align*} 
As $\lambda \to \infty$, we observe that 
\begin{align*} 
    A_r^{[\lambda]} \almostsure\begin{pmatrix} V_r^2 & 0 \\ 0 & V_r \end{pmatrix}. 
\end{align*} 
Since the map $M \mapsto \Vert M \Vert_{\op}^2$ is continuous, it follows that $\Vert A_r^{[\lambda]}\Vert_{\op}^2 \to V_r^2$ almost surely. Using the Dominated Convergence Theorem, we get 
\begin{align*} 
    \mathbb{E}\Big[\big\Vert A_r^{[\lambda]}\big\Vert_{\op}^2\Big] \to \mathbb{E}[V_r^2]=\frac{1}{2m+1}, 
\end{align*} 
where we use the fact that $V_i$ is Beta$(1/m,1)$-distributed, as the marginal of a \Dirichlet$(1/m,\dots,1/m)$ distribution. Thus, there exists $\lambda_0$ such that for all $\lambda \ge \lambda_0$, we have
\begin{align*} 
    \mathbb{E}\big[\Vert A_r^{[\lambda]}\Vert_{\op}^2\big] \le \delta<\frac{1}{m+1}.
\end{align*} 
Since the $V_i$'s are identically distributed, we obtain
\begin{align*} 
    \mathbb{E}\Big[\sum_{r=1}^{m+1}\big\Vert A_r^{[\lambda]}\big\Vert_{\op}^2\Big] \le (m+1)\delta<1. 
\end{align*} 
\end{proof}
Last but not least, we have to show condition (C) for the weighted norm $\Vert \cdot \Vert_\lambda$.
\begin{lemma} 
    For $r=1,\dots,m+1$ and all $\ell \in \N_0$, we have
    \begin{align}\label{technical_condition} 
    \mathbb{E}\Big[ 1_{\{I_r^{(n)} \le \ell\}\cup\{I_r^{(n)}=n\}}\Vert A_r^{(n),\ast} A_r^{(n)} \Vert_{\op,\lambda}\Big] \to 0. 
\end{align} 
\end{lemma}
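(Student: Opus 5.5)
Two facts combine: the integrand is uniformly bounded, and the probability of the indicator event tends to zero.

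First, the bound on the operator norm. By the computation in Lemma~\ref{normshift}, applied to $A_r^{(n)}$ in place of $A_r$,
$$\Vert A_r^{(n),\ast}A_r^{(n)}\Vert_{\op,\lambda}=\Vert D_\lambda A_r^{(n)}D_\lambda^{-1}\Vert_{\op}^2 .$$
The matrix inside is
$$D_\lambda A_r^{(n)}D_\lambda^{-1}=\begin{pmatrix} (I_r^{(n)}/n)^2 & mI_r^{(n)}(n-I_r^{(n)})/(\sqrt{\lambda}\,n^2)\\ 0 & I_r^{(n)}/n\end{pmatrix}.$$
Since $1\le I_r^{(n)}\le n$, every entry is bounded by $\max(1,m/\sqrt\lambda)$. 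The operator norm is bounded by the Frobenius norm, so we get a deterministic constant $C=C(m,\lambda)$ with
$$\Vert A_r^{(n),\ast}A_r^{(n)}\Vert_{\op,\lambda}\le C .$$
Hence the expectation in \eqref{technical_condition} is at most
$$C\,\prob\big(I_r^{(n)}\le \ell\big)+C\,\prob\big(I_r^{(n)}=n\big).$$

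Second, both probabilities vanish, via the Pólya urn of Section~\ref{polya_urn}. By Lemma~\ref{pol_conv}, $I_r^{(n)}/n\to V_r$ almost surely, and $V_r$ is $\Beta(1/m,1)$-distributed, so $0<V_r<1$ almost surely.
\begin{itemize}
\item On $\{I_r^{(n)}\le\ell\}$ we have $I_r^{(n)}/n\le \ell/n\to0$. Almost sure convergence therefore gives $\limsup_n \prob(I_r^{(n)}\le \ell)\le \prob(V_r=0)=0$; concretely, $\eins_{\{I_r^{(n)}\le\ell\}}\to0$ almost surely, and dominated convergence applies.
\item On $\{I_r^{(n)}=n\}$ we have $I_r^{(n)}/n=1$. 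Since $V_r<1$ almost surely, this event eventually fails almost surely, so $\prob(I_r^{(n)}=n)\to 0$.
\end{itemize}
(In fact $\sum_r I_r^{(n)}=n+m-1$ with every $I_s^{(n)}\ge 1$, so $I_r^{(n)}\le n$ always, and $I_r^{(n)}=n$ forces all other parts to be trivial. That probability can even be computed explicitly as a product $\prod_k(1-\cdot)$ decaying polynomially, but the almost-sure argument suffices.)

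Combining the two steps yields \eqref{technical_condition}. I expect no serious obstacle. The only points needing care are that the boundedness holds uniformly in $n$, which it does because $\lambda$ is fixed (chosen in Lemma~\ref{contraction}), and that the limit law has no atoms at $0$ or $1$, which holds because the Beta distribution is absolutely continuous.
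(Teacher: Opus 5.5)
Your proof is correct and follows essentially the same route as the paper: a uniform bound on the integrand via Lemma~\ref{normshift} and the Frobenius norm, then $\prob(I_r^{(n)}\le\ell)\to0$ because $I_r^{(n)}/n\to V_r$ and $V_r$ has no atom at $0$. The paper argues through weak convergence and an $\varepsilon$-argument, where you use almost sure convergence and dominated convergence.

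One small slip is in your parenthetical remark. Since $\sum_r I_r^{(n)}=n+m-1$ and all $I_s^{(n)}\ge1$, in fact $I_r^{(n)}\le n-1$, so $\{I_r^{(n)}=n\}$ is empty and $\prob(I_r^{(n)}=n)=0$ exactly, as the paper notes; ``all other parts trivial'' corresponds to $I_r^{(n)}=n-1$. This does not affect your main argument.
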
 
\begin{proof} 
Applying Lemma \ref{normshift} again, we obtain 
\begin{align*} 
  \Vert A_r^{(n),\ast} A_r^{(n)}\Vert_{\op,\lambda} &= \left\Vert \begin{pmatrix} \big(I_r^{(n)}/n\big)^2 & mI_r^{(n)}(n-I_r^{(n)})/(\sqrt{\lambda\, } n^2) \\ 0 & I_r^{(n)}/n \end{pmatrix} \right\Vert_{\op}^2 \\
    &\le 2+\frac{m^2}{\lambda}. 
\end{align*} 
In the last step, we use $0 \le I_r^{(n)}/n \le 1$, and the fact that we have $\Vert A \Vert_{\op} \le \Vert A \Vert_F$ for all matrices $A$, where $\Vert \cdot \Vert_F$ denotes the Frobenius norm.  Furthermore, by weak convergence, we obtain
\begin{align*} 
    \limsup_{n \to \infty}\prob\big(I_r^{(n)} \le \ell \big) \le \limsup_{n \to \infty} \prob
    \Big(\frac{I_r^{(n)}}{n} \le \varepsilon\Big) = \mathbb{P}(V_r\le \varepsilon)=\sqrt[m]{\varepsilon}, 
\end{align*} 
for all $0<\varepsilon<1$. Since $\varepsilon$ can be chosen arbitrarily small, we obtain that $\mathbb{P}(I_r^{(n)}\le \ell)\to 0$ for $n \to \infty$. Consequently, \eqref{technical_condition} follows, since we have $\mathbb{P}(I_r^{(n)}=n)=0$. 
This completes the proof of Theorem \ref{thm_1}.
\end{proof} 
\subsubsection{The covariance matrix}
In a second step, we can compute the covariance matrix of $(W,P)$ to obtain the leading terms of $\Var(W_n)$, $\Var(P_n)$ and $\Cov(W_n,P_n)$. 
\begin{theorem}\label{thm:covariance-fixed-point}
Let
$\mathcal{L}((W,P)^T)$
be the unique fixed point in $\mathcal{M}^2_{0,2}$ of the map~$T$ from Theorem \ref{thm_1}.
\[
{\mathbf \Sigma}:=\Cov\!\begin{pmatrix}W\\ P\end{pmatrix}
=
\begin{pmatrix}
\sigma_W^2 & \sigma_{WP}\\
\sigma_{WP} & \sigma_P^2
\end{pmatrix}.
\]
Then the entries are given by
\begin{align*}
\sigma_P^2
&=
\frac{2m+1}{m}\,\Var(b_2),
\\[1ex]
\sigma_{WP}
&=
\frac{m+1}{2}\,\Var(b_2)
+
\frac{3m+1}{2m}\,\Cov(b_1,b_2),
\\[1ex]
\sigma_W^2
&=
\frac{m(m+1)}{3}\,\Var(b_2)
+
\frac{m+1}{3}\,\Cov(b_1,b_2)
+
\frac{4m+1}{3m}\,\Var(b_1).
\end{align*}
\end{theorem}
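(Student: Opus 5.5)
The plan is to take covariances on both sides of the fixed-point equation $(W,P)^T \eqlaw \sum_{r=1}^{m+1} A_r (W^{(r)},P^{(r)})^T + b$. Since the $(W^{(r)},P^{(r)})$ are centered, mutually independent, and independent of $(V_1,\dots,V_{m+1})$, all cross terms between different $r$ vanish. The same holds for cross terms between $A_r(W^{(r)},P^{(r)})^T$ and $b$, because $\E[A_r X^{(r)} b^T]=\E[A_r\,\E[X^{(r)}]\,b^T]=0$. Since $\E[b]=0$, this gives the matrix identity
$$\mathbf\Sigma=\sum_{r=1}^{m+1}\E\big[A_r\mathbf\Sigma A_r^T\big]+\Cov(b).$$
With $A_r=\begin{pmatrix}V_r^2 & mV_r(1-V_r)\\ 0& V_r\end{pmatrix}$, we expand the entries of $A_r\mathbf\Sigma A_r^T$:
\begin{align*}
(2,2)&: V_r^2\sigma_P^2,\\
(1,2)&: V_r^3\sigma_{WP}+mV_r^2(1-V_r)\sigma_P^2,\\
(1,1)&: V_r^4\sigma_W^2+2mV_r^3(1-V_r)\sigma_{WP}+m^2V_r^2(1-V_r)^2\sigma_P^2.
\end{align*}
This gives a triangular linear system: first solve for $\sigma_P^2$, then $\sigma_{WP}$, then $\sigma_W^2$. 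Uniqueness of the solution follows because the diagonal coefficients $(m+1)\E[V^k]$ for $k=2,3,4$ are all less than $1$.

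The needed inputs are the Beta$(1/m,1)$ moments, $\E[V^k]=\frac{1/m}{k+1/m}=\frac1{km+1}$. The coefficients are then:
\begin{itemize}
\item $(m+1)\E[V^2]=\frac{m+1}{2m+1}$, so $\sigma_P^2\big(1-\frac{m+1}{2m+1}\big)=\Var(b_2)$, i.e.\ $\sigma_P^2=\frac{2m+1}{m}\Var(b_2)$.
\item $(m+1)\E[V^3]=\frac{m+1}{3m+1}$, so $1-\frac{m+1}{3m+1}=\frac{2m}{3m+1}$. Also $m(m+1)\E[V^2-V^3]=m(m+1)\frac{m}{(2m+1)(3m+1)}$. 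Multiplying by $\sigma_P^2$ and by $\frac{3m+1}{2m}$ should yield $\frac{m+1}{2}\Var(b_2)$ together with $\frac{3m+1}{2m}\Cov(b_1,b_2)$. This matches the claim.
\item $(m+1)\E[V^4]=\frac{m+1}{4m+1}$, so $1-\frac{m+1}{4m+1}=\frac{3m}{4m+1}$. Also $\E[V^3-V^4]=\frac{m}{(3m+1)(4m+1)}$ and $\E[V^2(1-V)^2]=\E[V^2]-2\E[V^3]+\E[V^4]$, which we compute from the moment formula.
\end{itemize}
Substituting $\sigma_P^2$ and $\sigma_{WP}$ and simplifying the rational functions of $m$ should give the stated coefficients $\frac{m(m+1)}3$, $\frac{m+1}{3}$ and $\frac{4m+1}{3m}$.

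I would present the covariance identity as a lemma, justified by the independence structure in Theorem~\ref{thm_1} and the finiteness of second moments in $\mathcal M^2_{0,2}$. The three scalar equations then follow in order.

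The main obstacle is purely algebraic: the $\sigma_W^2$ equation collects contributions from $\sigma_{WP}$ (itself a combination of $\Var(b_2)$ and $\Cov(b_1,b_2)$) and from $\sigma_P^2$, weighted by $(m+1)\E[V^2(1-V)^2]$. Careful rational simplification is needed to confirm the final coefficients. I would check the result against the case $m=1$, where it must reduce to the known recursive-tree constants.
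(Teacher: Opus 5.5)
Your proposal is correct and follows the paper's proof essentially step for step: take covariances in the fixed-point equation to get $\Sigma=\sum_{r=1}^{m+1}\E[A_r\Sigma A_r^T]+\Cov(b)$, then solve the resulting triangular system for $\sigma_P^2$, $\sigma_{WP}$, $\sigma_W^2$ in that order using the $\Beta(1/m,1)$ moments $\E[V^k]=1/(km+1)$. The algebra you left open does close: with $\E[V^2(1-V)^2]=\frac{2m^2}{(2m+1)(3m+1)(4m+1)}$, the $\Var(b_2)$ coefficient in $\sigma_W^2$ becomes $\frac{m(m+1)^2+2m^2(m+1)}{3(3m+1)}=\frac{m(m+1)}{3}$, as claimed.
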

The quantities $\Var(b_1), \Var(b_2), \Cov(b_1,b_2)$ are explicitly computed in 
Proposition \ref{prop:explicit-b-covariances}, 
see the appendix,
where also a proof of Theorem \ref{thm:covariance-fixed-point} is given. The computations in the appendix allow us to compute the values $\sigma_P^2$, 
$\sigma_{WP}$ and $\sigma_W^2$ numerically. The first 10 of these values are given in Table \ref{table_1}.
\begin{table}[ht]
\centering
\caption{Numerical values of $\sigma_P^2$, $\sigma_{WP}$, and $\sigma_W^2$, for $m=1,\dots,10$.}\label{table_1}
\begin{tabular}{|r||c|c|c|}
\hline  \hline
$m$ & $\sigma_P^2$ & $\sigma_{WP}$ & $\sigma_W^2$ \\
\hline \hline
1  & 0.35507 & 0.10507 & 0.07729 \\  \hline
2  & 1.59412 & 1.05492 & 1.50031 \\ \hline
3  & 3.63963 & 3.68673 & 7.87806 \\ \hline
4  & 6.44273 & 8.70427 & 24.89798 \\ \hline
5  & 9.98224 & 16.76724 & 60.16197 \\ \hline
6  & 14.24781 & 28.51985 & 123.14688 \\ \hline
7  & 19.23390 & 44.59980 & 225.19265 \\ \hline
8  & 24.93724 & 65.64169 & 379.49796 \\ \hline
9  & 31.35580 & 92.27853 & 601.11842 \\ \hline
10 & 38.48827 & 125.14241 & 906.96567 \\ \hline
\hline
\end{tabular}
\end{table}
\section*{Acknowledgment}
Some of this research was conducted while the fourth author was on sabbatical leave at The Catholic University of America  in Fall 2025. That author thanks the Department of Mathematics and Statistics there for their gracious reception and hospitality and for an environment conducive to collaborative research. 
\section{Tools and computational resource disclosure}
At the recommendation of the Leiden Declaration on Artificial Intelligence and Mathematics \cite{Leiden26}, 
we disclose the extent to which AI was used in the writing and production of this manuscript.

Parts of the text were linguistically refined with the assistance of AI, specifically ChatGPT-5.4. The computation of the constants in Theorem \ref{thm:covariance-fixed-point}, stated explicitly in the appendix, is elementary and consists mainly of standard integrals that can be evaluated either by hand using classical integration techniques or with a symbolic or numerical computing environment such as Maple. Because a large number of such terms arise, we used ChatGPT-5.4 to help organize the computations. We carefully verified the resulting expressions. The numerical values reported in Table \ref{table_1} are obtained using a Python program based on the formulas given in the appendix. 
   
\appendix
\section{Computation of the Covariance Matrix} 
\label{app_A}
We start by proving Theorem \ref{thm:covariance-fixed-point}.
\begin{proof}[Proof of Theorem \ref{thm:covariance-fixed-point}]
Using $\E[W]=\E[P]=\E[b_1]=\E[b_2]=0$ and writing $Y^{(r)}:=(   W^{(r)},
    P^{(r)})^T$, taking covariances in the fixed-point equation yields
\begin{align*}
    \Sigma=\E\left[\left(\sum_{r=1}^{m+1} A_rY^{(r)}+b\right)\left(\sum_{r=1}^{m+1} A_rY^{(r)}+b\right)^T\right]
\end{align*}
which gives us
\begin{align*}
    \Sigma&=\sum_{r=1}^{m+1}\sum_{s=1}^{m+1} \E\left[ A_r Y^{(r)} \left(Y^{(s)}\right)^T A_s^T\right]+\sum_{r=1}^{m+1} \E\left[A_rY^{(r)}b^T\right]\\
    &\quad +\sum_{s=1}^{m+1} \E\left[b\left(Y^{(s)}\right)^T A_s^T\right]+\E\left[bb^T\right].
\end{align*}
The second and third summands vanish since $(A_1,\dots,A_{m+1},b)$ is independent of $Y^{(r)}$ for all $r=1,\dots,m+1$ and we have $\E\left[Y^{(r)}\right]=0$ while $\E\left[bb^T\right]=\Cov(b)$ since $\E[b]=0$. Furthermore, we have
\begin{align*}
    \E\left[A_rY^{(r)}\left(Y^{(r)}\right)^T A_r^T\,\middle|\,A_r\right]=A_r\Sigma A_r^T
\end{align*}
and
\begin{align*}
    \E\left[A_rY^{(r)}\left(Y^{(s)}\right)^T A_s^T\,\middle|\,(A_1,\dots,A_{m+1})\right]=0,
\end{align*}
once again using $\E\left[Y^{(r)}\right]=0$ and the independence of $(A_1,\dots,A_{m+1})$ and $Y^{(i)}$. Putting these estimates together gives us
\begin{align}\label{Sigma}
    \Sigma=\sum_{r=1}^{m+1} \E\left[A_r \Sigma A_r^T\right]+\Cov(b).
\end{align}
To compute the first term in \eqref{Sigma}, we write
\[
\Sigma=
\begin{pmatrix}
\sigma_W^2 & \sigma_{WP}\\
\sigma_{WP} & \sigma_P^2
\end{pmatrix}
\]
to obtain
\[
A_r\Sigma A_r^T
=
\begin{pmatrix}
\Upsilon_{11} & \Upsilon_{12}\\
 \Upsilon_{12} &  \Upsilon_{22}
\end{pmatrix}\]
with 
\begin{align*}
\Upsilon_{11}&:=V_r^4\sigma_W^2
+2mV_r^3(1-V_r)\sigma_{WP}
+m^2V_r^2(1-V_r)^2\sigma_P^2,\\
\Upsilon_{12}&:=V_r^3\sigma_{WP}+mV_r^2(1-V_r)\sigma_P^2,\\
 \Upsilon_{22}&:=V_r^2\sigma_P^2.
 \end{align*}

Hence, we get
\begin{align}
\sigma_P^2
&=
\sum_{r=1}^{m+1}\E[V_r^2]\,\sigma_P^2+\Var(b_2),
\label{eq:cov-fixed-point-P}
\\[1ex]
\sigma_{WP}
&=
\sum_{r=1}^{m+1}\E[V_r^3]\,\sigma_{WP}
+\sum_{r=1}^{m+1}\E[mV_r^2(1-V_r)]\,\sigma_P^2
+\Cov(b_1,b_2),
\label{eq:cov-fixed-point-WP}
\\[1ex]
\sigma_W^2
&=
\sum_{r=1}^{m+1}\E[V_r^4]\,\sigma_W^2
+2\sum_{r=1}^{m+1}\E[mV_r^3(1-V_r)]\,\sigma_{WP}
\notag\\
&\qquad
+\sum_{r=1}^{m+1}\E[m^2V_r^2(1-V_r)^2]\,\sigma_P^2
+\Var(b_1).
\label{eq:cov-fixed-point-W}
\end{align}
We now compute the required moments. Since
\[
(V_1,\dots,V_{m+1})\sim \Dir\!\left(\frac1m,\dots,\frac1m\right),
\]
every marginal $V_r$, for $r=1,\dots,m+1$, has Beta distribution
\[
V_r\sim \Beta\!\left(\frac1m,1\right).
\]
Therefore, for each integer $k\ge1$,
\[
\E\left[V_r^k\right]
=
\frac{1}{km+1}.
\]
In particular, we have
\[
\E\left[V_r^2\right]=\frac{1}{2m+1},
\qquad
\E\left[V_r^3\right]=\frac{1}{3m+1},
\qquad
\E\left[V_r^4\right]=\frac{1}{4m+1},
\]
which gives us
\[
\sum_{r=1}^{m+1}\E\left[V_r^2\right]=\frac{m+1}{2m+1},
\quad
\sum_{r=1}^{m+1}\E\left[V_r^3\right]=\frac{m+1}{3m+1},
\quad
\sum_{r=1}^{m+1}\E\left[V_r^4\right]=\frac{m+1}{4m+1}.
\]
Furthermore, we have
\begin{align*}
\E\left[V_r^2(1-V_r)\right]
&=
\E\left[V_r^2\right]-\E\left[V_r^3\right]
=
\frac{1}{2m+1}-\frac{1}{3m+1}\\
&=\frac{m}{(2m+1)(3m+1)},
\end{align*}
hence we obtain
\[
\sum_{r=1}^{m+1}\E\left[mV_r^2(1-V_r)\right]
=
\frac{m^2(m+1)}{(2m+1)(3m+1)}.
\]
Likewise, we can compute
\begin{align*}
\E\left[V_r^3(1-V_r)\right]
&=
\E\left[V_r^3\right]-\E\left[V_r^4\right]
=
\frac{1}{3m+1}-\frac{1}{4m+1}\\
&=\frac{m}{(3m+1)(4m+1)},
\end{align*}
and thus get
\[
\sum_{r=1}^{m+1}\E\left[mV_r^3(1-V_r)\right]
=
\frac{m^2(m+1)}{(3m+1)(4m+1)}.
\]
Finally, we have
\begin{align*}
\E\left[V_r^2(1-V_r)^2\right]
&=
\E\left[V_r^2\right]-2\E\left[V_r^3\right]+\E\left[V_r^4\right]\\
&=\frac{1}{2m+1}-\frac{2}{3m+1}+\frac{1}{4m+1}\\
&=\frac{2m^2}{(2m+1)(3m+1)(4m+1)},
\end{align*}
which implies
\[
\sum_{r=1}^{m+1}\E[m^2V_r^2(1-V_r)^2]
=
\frac{2m^4(m+1)}{(2m+1)(3m+1)(4m+1)}.
\]
Substituting these into \eqref{eq:cov-fixed-point-P}, we obtain
\[
\sigma_P^2
=
\frac{m+1}{2m+1}\sigma_P^2+\Var(b_2),
\]
which gives us
\[
\sigma_P^2=\frac{2m+1}{m}\Var(b_2).
\]
Next, \eqref{eq:cov-fixed-point-WP} has the form
\[
\sigma_{WP}
=
\frac{m+1}{3m+1}\sigma_{WP}
+
\frac{m^2(m+1)}{(2m+1)(3m+1)}\sigma_P^2
+
\Cov(b_1,b_2),
\]
which gives us
\[
\frac{2m}{3m+1}\sigma_{WP}
=
\frac{m^2(m+1)}{(2m+1)(3m+1)}\sigma_P^2
+
\Cov(b_1,b_2)
\]
and therefore
\[
\sigma_{WP}
=
\frac{m(m+1)}{2(2m+1)}\sigma_P^2
+
\frac{3m+1}{2m}\Cov(b_1,b_2).
\]
If we substitute $\sigma_P^2=(2m+1)\Var(b_2)/m$ into the formula for $\sigma_{WP}$, we obtain
\[
\sigma_{WP}
=
\frac{m+1}{2}\Var(b_2)
+
\frac{3m+1}{2m}\Cov(b_1,b_2).
\]
Finally, \eqref{eq:cov-fixed-point-W} yields
\begin{align*}
\sigma_W^2
&=
\frac{m+1}{4m+1}\sigma_W^2
+
2\frac{m^2(m+1)}{(3m+1)(4m+1)}\sigma_{WP}\\
&\quad+
\frac{2m^4(m+1)}{(2m+1)(3m+1)(4m+1)}\sigma_P^2
+
\Var(b_1).
\end{align*}
Hence, we get
\begin{align*}
\frac{3m}{4m+1}\sigma_W^2
&=
2\frac{m^2(m+1)}{(3m+1)(4m+1)}\sigma_{WP}\\
&\quad+
\frac{2m^4(m+1)}{(2m+1)(3m+1)(4m+1)}\sigma_P^2
+
\Var(b_1).
\end{align*}
and obtain
\[
\sigma_W^2
=
\frac{2m(m+1)}{3(3m+1)}\sigma_{WP}
+
\frac{2m^3(m+1)}{3(2m+1)(3m+1)}\sigma_P^2
+
\frac{4m+1}{3m}\Var(b_1).
\]
Inserting the expressions for $\sigma_P^2$ and $\sigma_{WP}$ into the formula for $\sigma_W^2$ gives us
\begin{align*}
\sigma_W^2
&=
\frac{2m(m+1)}{3(3m+1)}
\left(
\frac{m+1}{2}\Var(b_2)
+
\frac{3m+1}{2m}\Cov(b_1,b_2)
\right)\\
&+
\frac{2m^3(m+1)}{3(2m+1)(3m+1)}
\cdot
\frac{2m+1}{m}\Var(b_2)
+
\frac{4m+1}{3m}\Var(b_1),
\end{align*}
which simplifies to
\[
\sigma_W^2
=
\frac{m(m+1)}{3}\,\Var(b_2)
+
\frac{m+1}{3}\,\Cov(b_1,b_2)
+
\frac{4m+1}{3m}\,\Var(b_1),
\]
\end{proof}
In a last step, we compute the covariance matrix of $b$ in explicit terms of the digamma function $\psi$ and the trigamma function $\psi_1$.
\begin{proposition}\label{prop:explicit-b-covariances}
Let
\[
(V_1,\dots,V_{m+1})\sim \Dir\!\left(\frac1m,\dots,\frac1m\right),
\]
and let
\[
b_1
=
m^2\left(
\sum_{r=1}^{m+1}V_r\ln(V_r)
-\sum_{r=1}^{m+1}V_r^2
+\sum_{r=1}^{m}V_r(1-V_r)
+1
\right),
\]
\[
b_2
=
m\left(
\sum_{r=1}^{m+1}V_r\ln(V_r)+1-V_{m+1}
\right).
\]
Further, set
\[
S:=\sum_{r=1}^{m+1}V_r\ln(V_r),
\qquad
T:=\sum_{r=1}^{m}V_r^2,
\qquad
U:=V_{m+1}.
\]
Then
\begin{align*}
\Var(b_1)
&=
m^4\Bigl(
\Var(S)+\Var(U)+\Var(U^2)+4\Var(T)-2\Cov(S,U)
\\[-0.2ex]
&\hspace{4em}
-2\Cov(S,U^2)-4\Cov(S,T)+2\Cov(U,U^2)\\[-0.2ex]
&\hspace{4em}
+4\Cov(U,T)+4\Cov(U^2,T)
\Bigr),
\\[1ex]
\Var(b_2)
&=
m^2\Bigl(
\Var(S)+\Var(U)-2\Cov(S,U)
\Bigr),
\\[1ex]
\Cov(b_1,b_2)
&=
m^3\Bigl(
\Var(S)+\Var(U)-2\Cov(S,U)-\Cov(S,U^2)
\\[-0.2ex]
&\hspace{4.8em}
+\Cov(U,U^2)-2\Cov(S,T)+2\Cov(U,T)
\Bigr).
\end{align*}
Moreover,
\begin{align*}
\E[S] &= \psi\!\left(\frac{m+1}{m}\right) -\psi\!\left(\frac{2m+1}{m}\right),\\ 
\E[T] &= \frac{m}{2m+1},\\
\E[U] &= \frac{1}{m+1}, \\
\Var(U) &=\frac{m^2}{(m+1)^2(2m+1)},\\
             \Cov(U,T)&=\frac{m}{(2m+1)(3m+1)} -\frac{m}{(m+1)(2m+1)}.
\end{align*}
The remaining terms can be reduced to the following Dirichlet moments:
\begin{align*}
\Var(S)
&=
(m+1)\E\bigl[V_1^2(\ln(V_1))^2\bigr]
+m(m+1)\E\bigl[V_1V_2\ln(V_1)\ln(V_2)\bigr]
-\E[S]^2,
\\[1ex]
\Cov(S,U)&=
m\,\E\!\left[V_1 \ln(V_1)\,U\right]+\E\!\left[U^2 \ln(U)\right]-\E[S]\E[U],
\\[1ex]
\Var(T)
&=
m\E\bigl[V_1^4\bigr]
+m(m-1)\E\bigl[V_1^2V_2^2\bigr]
-\E[T]^2,
\\[1ex]
\Cov(S,T)
&=
m\E\bigl[V_1^3\ln(V_1)\bigr]
+m^2\E\bigl[V_1V_2^2\ln(V_1)\bigr]
-\E[S]\E[T],
\\[1ex]
\Var(U^2)
&=
\E[U^4]-\E[U^2]^2,
\\[1ex]
\Cov(S,U^2)
&=
m\E\bigl[V_1U^2\ln(V_1)\bigr]+\E\bigl[U^3\ln(U)\bigr]-\E[S]\E[U^2],
\\[1ex]
\Cov(U,U^2)
&=
\E[U^3]-\E[U]\E[U^2],
\\[1ex]
\Cov(U^2,T)
&=
m\E\bigl[V_1^2U^2\bigr]-\E[U^2]\E[T],
\end{align*}
where
\begin{align*}
\E\bigl[V_1^2(\ln(V_1))^2\bigr]
&=
\frac{1}{2m+1}
\Biggl[
\left(
\psi\!\left(\frac{2m+1}{m}\right)
-\psi\!\left(\frac{3m+1}{m}\right)
\right)^2
\\[-0.2ex]
&\hspace{4.8em}
+\psi_1\!\left(\frac{2m+1}{m}\right)
-\psi_1\!\left(\frac{3m+1}{m}\right)
\Biggr],
\\[1ex]
\E\bigl[V_1V_2\ln(V_1)\ln(V_2)\bigr]
&=
\frac{1}{(m+1)(2m+1)}
\Biggl[
\left(
\psi\!\left(\frac{m+1}{m}\right)
-\psi\!\left(\frac{3m+1}{m}\right)
\right)^2
\\[-0.2ex]
&\hspace{4.8em}
-\psi_1\!\left(\frac{3m+1}{m}\right)
\Biggr],
\\[1ex]
\E\bigl[V_1\ln(V_1)\,U\bigr]
&=
\frac{1}{(m+1)(2m+1)}
\left(
\psi\!\left(\frac{m+1}{m}\right)
-\psi\!\left(\frac{3m+1}{m}\right)
\right),
\\[1ex]
\E\bigl[V_1^4\bigr]
&=
\frac{1}{4m+1},\quad
\E\bigl[V_1^2V_2^2\bigr]
=
\frac{m+1}{(2m+1)(3m+1)(4m+1)},
\\[1ex]
\E\bigl[V_1^3\ln(V_1)\bigr]
&=
\frac{1}{3m+1}
\left(
\psi\!\left(\frac{3m+1}{m}\right)
-\psi\!\left(\frac{4m+1}{m}\right)
\right),
\\[1ex]
\E\bigl[V_1V_2^2\ln(V_1)\bigr]
&=
\frac{1}{(2m+1)(3m+1)}
\left(
\psi\!\left(\frac{m+1}{m}\right)
-\psi\!\left(\frac{4m+1}{m}\right)
\right),
\\[1ex]
\E[U^2]
&=
\frac{1}{2m+1},
\qquad
\E[U^3]
=
\frac{1}{3m+1},
\qquad
\E[U^4]
=
\frac{1}{4m+1},
\\[1ex]
\E\bigl[V_1^2U^2\bigr]
&=
\frac{m+1}{(2m+1)(3m+1)(4m+1)},
\end{align*}
\begin{align*}
\E\bigl[V_1U^2\ln(V_1)\bigr]
&=
\frac{1}{(2m+1)(3m+1)}
\left(
\psi\!\left(\frac{m+1}{m}\right)
-\psi\!\left(\frac{4m+1}{m}\right)
\right),
\\[1ex]
\E\!\left[U^2 \ln(U)\right]&=\frac{1}{2m+1}\left(\psi\!\left(\frac{2m+1}{m}\right)-\psi\!\left(\frac{3m+1}{m}\right)\right),
\\[1ex]
\E\bigl[U^3\ln(U)\bigr]
&=
\frac{1}{3m+1}
\left(
\psi\!\left(\frac{3m+1}{m}\right)
-\psi\!\left(\frac{4m+1}{m}\right)
\right).
\end{align*}
Here, $\psi(x):=\frac{d}{dx}\left(\ln\Gamma(x)\right)$ and $\psi_1(x):=\frac{d}{dx}\psi(x)$ denote the digamma and trigamma functions, respectively.
\end{proposition}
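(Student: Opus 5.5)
The plan is to expand $b_1$ and $b_2$ in terms of $S$, $T$, $U$, use bilinearity of covariance, and then evaluate every elementary mixed moment with the Dirichlet moment formula and its derivatives in the exponents.

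\textbf{Step 1: rewrite $b_1$ and $b_2$.} Using $\sum_{r=1}^{m+1}V_r=1$, we have
\[
\sum_{r=1}^{m}V_r(1-V_r)=(1-U)-T
\qquad\text{and}\qquad
\sum_{r=1}^{m+1}V_r^2=T+U^2 .
\]
Hence
\[
b_1=m^2\bigl(S-2T-U^2-U+2\bigr),\qquad b_2=m\bigl(S-U+1\bigr).
\]
Expanding $\Var(b_1)$, $\Var(b_2)$ and $\Cov(b_1,b_2)$ bilinearly in $S,T,U,U^2$ gives the three displayed combinations; the constants drop out. The signs are checked term by term. For instance, $\Cov(b_1,b_2)=m^3\Cov(S-2T-U^2-U,\;S-U)$, which yields $\Var S-2\Cov(S,U)+\Var U-\Cov(S,U^2)+\Cov(U,U^2)-2\Cov(S,T)+2\Cov(U,T)$.

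\textbf{Step 2: reduce to moments.} By exchangeability of the $V_r$, with $U$ playing the role of the last coordinate:
\begin{itemize}
\item $\E[S]=(m+1)\E[V_1\ln V_1]$ and $\E[T]=m\E[V_1^2]$.
\item Expanding squares gives $\E[S^2]=(m+1)\E[V_1^2\ln^2V_1]+m(m+1)\E[V_1V_2\ln V_1\ln V_2]$.
\item $\E[ST]=m\E[V_1^3\ln V_1]+m^2\E[V_1V_2^2\ln V_1]$. Here the $m^2$ comes from $m$ indices $j\le m$ for the $V_j^2$ factor and $m$ off-diagonal indices $i\neq j$ in $S$.
\item $\E[SU]$ and $\E[SU^2]$ split into the $r\le m$ terms and the $r=m+1$ term.
\item $\Cov(U^2,T)$ becomes $m\E[V_1^2U^2]-\E[U^2]\E[T]$.
\end{itemize}
These reductions are exactly the listed formulas.

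\textbf{Step 3: evaluate the moments.} With $\alpha=1/m$ and $K=m+1$, so that $\sum\alpha_i=(m+1)/m$, the Dirichlet moment formula is
\[
\E\Bigl[\prod_i V_i^{a_i}\Bigr]=\frac{\Gamma(K\alpha)}{\Gamma(K\alpha+\sum a_i)}\prod_i\frac{\Gamma(\alpha+a_i)}{\Gamma(\alpha)} .
\]
It gives all polynomial moments directly. For example, $\E[V_1^k]=1/(km+1)$, which reduces $\Var(U)$ and $\Cov(U,T)$ to rational expressions. Logarithmic moments come from differentiating with respect to the exponents: $\E[V_1^{a}\ln V_1]=\partial_a\E[V_1^a]$, so
\[
\E[V_1^a\ln V_1]=\E[V_1^a]\bigl(\psi(\alpha+a)-\psi(K\alpha+a)\bigr).
\]
Taking $a=1$ gives $\E[S]$, since $(m+1)\E[V_1]=1$. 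Using $(\alpha+1)=(m+1)/m$ and $K\alpha+1=(2m+1)/m$ gives the stated digamma arguments. The second derivative in $a$ yields $\E[V_1^2\ln^2 V_1]$, namely $\E[V_1^2]\bigl((\psi(\alpha+2)-\psi(K\alpha+2))^2+\psi_1(\alpha+2)-\psi_1(K\alpha+2)\bigr)$. The mixed derivative $\partial_{a_1}\partial_{a_2}$ at $a_1=a_2=1$ gives $\E[V_1V_2\ln V_1\ln V_2]$; here only $\psi_1(K\alpha+2)$ survives in the second-order part.

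\textbf{Main obstacle.} The main difficulty is bookkeeping rather than any conceptual step. One must track the combinatorial multiplicities ($m$ versus $m+1$ versus $m(m\pm1)$) in the reductions of Step 2. One must also simplify the digamma arguments correctly; note that $\alpha+a=(am+1)/m$, and the last coordinate shifts $K\alpha+\sum a_i$. I would verify each formula numerically for $m=1,2$ by Monte Carlo, or against the $m=1$ case of \cite{Neininger}, which gives a check of $\sigma_P^2$ in Table~\ref{table_1}.
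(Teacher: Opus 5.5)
Your proposal is correct and follows essentially the same route as the paper's proof. You rewrite $b_1=m^2(S-2T-U^2-U+2)$ and $b_2=m(S-U+1)$ via $\sum_r V_r=1$, expand the (co)variances bilinearly, reduce by Dirichlet exchangeability (multiplicities $m+1$, $m(m+1)$, $m(m-1)$, $m^2$), and evaluate the polynomial and logarithmic moments from the Dirichlet moment formula and its first and second exponent-derivatives, with digamma and trigamma arguments that agree with the stated formulas.
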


\begin{proof}
Set
\[
S:=\sum_{r=1}^{m+1}V_r\ln(V_r),\qquad
T:=\sum_{r=1}^{m}V_r^2,\qquad
U:=V_{m+1}.
\]
We have
\[
\sum_{r=1}^{m+1}V_r^2=T+U^2
\]
and, since $\sum_{r=1}^{m+1}V_r=1$,
\[
\sum_{r=1}^{m}V_r(1-V_r)=\sum_{r=1}^{m}V_r-\sum_{r=1}^{m}V_r^2=(1-U)-T.
\]
Hence, we obtain
\begin{align*}
b_1
&=
m^2\left(
S-\sum_{r=1}^{m+1}V_r^2+\sum_{r=1}^{m}V_r(1-V_r)+1
\right)
\\
&=
m^2\left(
S-(T+U^2)+(1-U)-T+1
\right)
\\
&=
m^2\bigl(S+2-U-U^2-2T\bigr),
\end{align*}
while
\[
b_2=m(S+1-U).
\]
Therefore,
\begin{align*}
\Var(b_1)
&=
m^4\Var(S-U-U^2-2T)
\\
&=
m^4\Bigl(
\Var(S)+\Var(U)+\Var(U^2)+4\Var(T)
-2\Cov(S,U)
\\[-0.2ex]
&\hspace{4.8em}
-2\Cov(S,U^2)-4\Cov(S,T)
+2\Cov(U,U^2)\\[-0.2ex]
&\hspace{4.8em}
+4\Cov(U,T)
+4\Cov(U^2,T)
\Bigr),
\\[1ex]
\Var(b_2)
&=
m^2\Var(S-U)
=
m^2\Bigl(
\Var(S)+\Var(U)-2\Cov(S,U)
\Bigr),
\\[1ex]
\Cov(b_1,b_2)
&=
m^3\Cov(S-U-U^2-2T,\;S-U)
\\
&=
m^3\Bigl(
\Var(S)+\Var(U)-2\Cov(S,U)-\Cov(S,U^2)
\\[-0.2ex]
&\hspace{4.8em}
+\Cov(U,U^2)-2\Cov(S,T)+2\Cov(T,U)
\Bigr).
\end{align*}
First of all, we have, as in the proof of Theorem \ref{thm:covariance-fixed-point}, for every integer $k\ge1$,
\[
\E[V_r^k]=\frac{1}{km+1}.
\]
In particular, we obtain
\[
\E[T]=m\E[V_1^2]=\frac{m}{2m+1},
\qquad
\E[U]=\E[V_{m+1}]=\frac{1}{m+1},
\]
and
\[
\Var(U)
=
\frac{1/m}{\bigl((m+1)/m\bigr)^2\bigl((2m+1)/m\bigr)}
=
\frac{m^2}{(m+1)^2(2m+1)}.
\]
Next, by the symmetry of the Dirichlet distribution, we have
\[
\E[S]=(m+1)\E[V_1\ln(V_1)].
\]
To compute $\E[V_1\ln(V_1)]$, we use the standard moment formula for the Dirichlet distribution.
If
\[
(X_1,\dots,X_d)\sim \Dir(\alpha_1,\dots,\alpha_d)
\qquad\text{and}\qquad
\alpha:=\alpha_1+\cdots+\alpha_d,
\]
then for $t>-\alpha_1$,
\[
\E[X_1^t]
=
\frac{\Gamma(\alpha)\Gamma(\alpha_1+t)}
{\Gamma(\alpha_1)\Gamma(\alpha+t)}.
\]
Differentiating this identity with respect to $t$ gives
\[
\frac{d}{dt}\E[X_1^t]
=
\E[X_1^t\ln(X_1)].
\]
On the other hand, differentiating the right-hand side yields
\[
\frac{d}{dt}\E[X_1^t]
=
\frac{\Gamma(\alpha)\Gamma(\alpha_1+t)}
{\Gamma(\alpha_1)\Gamma(\alpha+t)}
\bigl(\psi(\alpha_1+t)-\psi(\alpha+t)\bigr),
\]
hence
\[
\E[X_1^t\ln(X_1)]
=
\E[X_1^t]\bigl(\psi(\alpha_1+t)-\psi(\alpha+t)\bigr).
\]
In the case
\[
(V_1,\dots,V_{m+1})\sim \Dir\!\left(\frac1m,\dots,\frac1m\right).
\]
we obtain
\begin{align*}
\E[V_1\ln(V_1)]
&=
\E[V_1]
\left(
\psi\!\left(\frac{1}{m}+1\right)
-
\psi\!\left(\frac{m+1}{m}+1\right)
\right)\\
&=\frac{1}{m+1}
\left(
\psi\!\left(\frac{m+1}{m}\right)
-
\psi\!\left(\frac{2m+1}{m}\right)
\right),
\end{align*}
which yields
\[
\E[S]
=
(m+1)\E[V_1\ln(V_1)]
=
\psi\!\left(\frac{m+1}{m}\right)
-
\psi\!\left(\frac{2m+1}{m}\right).
\]
We next derive the formulas for the second moments of $S$, $T$, and their mixed terms.
Expanding $S^2$, we get
\[
S^2
=
\sum_{r=1}^{m+1}V_r^2(\ln(V_r))^2
+
\sum_{\substack{r,s=1\\ r\neq s}}^{m+1}V_rV_s\ln(V_r)\ln(V_s).
\]
Once again using the symmetry of the Dirichlet distribution, all diagonal terms have the same expectation, and all off-diagonal
terms have the same expectation. Hence, we get
\[
\E\left[S^2\right]
=
(m+1)\E\left[V_1^2(\ln(V_1))^2\right]
+
m(m+1)\E\left[V_1V_2\ln(V_1)\ln(V_2)\right].
\]
Similarly it follows that
\[
SU=\sum_{r=1}^{m+1}V_r\ln(V_r)\,U,
\]
so we have
\[
\E[SU]=
m\,\E\!\left[V_1 \ln(V_1)\,U\right]+\E\!\left[U^2 \ln(U)\right],
\]
and thus
\[
\Cov(S,U)=m\,\E\!\left[V_1 \ln(V_1)\,U\right]+\E\!\left[U^2 \ln(U)\right]-\E[S]\E[U],
\]
For $T$, we have
\[
T^2
=
\sum_{r=1}^{m}V_r^4
+
\sum_{\substack{r,s=1\\ r\neq s}}^{m}V_r^2V_s^2.
\]
Again using the symmetry of the Dirichlet distribution, we obtain
\[
\E\left[T^2\right]
=
m\E\left[V_1^4\right]+m(m-1)\E\left[V_1^2V_2^2\right].
\]
For the mixed term $ST$, we get
\[
ST
=
\sum_{r=1}^{m+1}\sum_{s=1}^{m}V_rV_s^2\ln(V_r).
\]
Among these terms there are $m$ diagonal contributions with $r=s\le m$, each of the form
$V_1^3\ln(V_1)$, and $m^2$ off-diagonal contributions, each having the same expectation as
$V_1V_2^2\ln(V_1)$ by symmetry of the Dirichlet distribution. Hence, it follows
\[
\E[ST]
=
m\E\left[V_1^3\ln(V_1)\right]+m^2\E\left[V_1V_2^2\ln(V_1)\right].
\]
Finally, we have
\[
\Cov(T,U)=m\E\left[V_1^2U\right]-\E[T]\E[U].
\]
Using the Dirichlet moment formula, we get
\[
\E[V_1^2U]
=
\frac{\Gamma\!\left(\frac{m+1}{m}\right)}
{\Gamma\!\left(\frac{m+1}{m}+3\right)}
\frac{\Gamma\!\left(\frac{1}{m}+2\right)}
{\Gamma\!\left(\frac{1}{m}\right)}
\frac{\Gamma\!\left(\frac{1}{m}+1\right)}
{\Gamma\!\left(\frac{1}{m}\right)}.
\]
Using $\Gamma(x+1)=x\Gamma(x)$ and $\Gamma(x+2)=(x+1)x\Gamma(x)$, this simplifies to
\[
\E[V_1^2U]
=
\frac{1}{(2m+1)(3m+1)},
\]
so
\[
\Cov(T,U)
=
\frac{m}{(2m+1)(3m+1)}
-
\frac{m}{(m+1)(2m+1)}.
\]
Next, we turn to the terms involving $U^2$. Since $U=V_{m+1}\sim\Beta(1/m,1)$, we have
\[
\E[U^2]=\frac{1}{2m+1},
\qquad
\E[U^3]=\frac{1}{3m+1},
\qquad
\E[U^4]=\frac{1}{4m+1}.
\]
Using these moments, we can compute
\[
\Var(U^2)=\E[U^4]-\E[U^2]^2
\]
and
\[
\Cov(U,U^2)=\E[U^3]-\E[U]\E[U^2].
\]
Moreover, we have
\[
\Cov(U^2,T)=\E[U^2T]-\E[U^2]\E[T].
\]
Since
\[
T=\sum_{r=1}^{m}V_r^2,
\]
once again, symmetry yields
\[
\E[U^2T]=m\E[V_1^2U^2].
\]
Once again, the Dirichlet moment formula gives us
\[
\E[V_1^2U^2]
=
\frac{\Gamma\!\left(\frac{m+1}{m}\right)}
{\Gamma\!\left(\frac{m+1}{m}+4\right)}
\frac{\Gamma\!\left(\frac{1}{m}+2\right)}
{\Gamma\!\left(\frac{1}{m}\right)}
\frac{\Gamma\!\left(\frac{1}{m}+2\right)}
{\Gamma\!\left(\frac{1}{m}\right)}.
\]
Hence, we get
\[
\E[V_1^2U^2]
=
\frac{m+1}{(2m+1)(3m+1)(4m+1)}
\]
and finally
\[
\Cov(U^2,T)=m\E[V_1^2U^2]-\E[U^2]\E[T].
\]
Finally, we have to consider
\[
\Cov(S,U^2)=\E[SU^2]-\E[S]\E[U^2].
\]
This can be done using
\[
SU^2=\sum_{r=1}^{m+1}V_r\ln(V_r)\,U^2.
\]
Among these terms there are $m$ contributions with $r\le m$, each having the same expectation as
$V_1U^2\ln(V_1)$, and one contribution with $r=m+1$, namely $U^3\ln(U)$. Therefore,
\[
\E[SU^2]
=
m\E[V_1U^2\ln(V_1)]+\E[U^3\ln(U)],
\]
and thus
\[
\Cov(S,U^2)
=
m\E[V_1U^2\ln(V_1)]+\E[U^3\ln(U)]-\E[S]\E[U^2].
\]
It remains to evaluate the logarithmic mixed moments. These are obtained by differentiating
the Dirichlet moment formula. For one coordinate,
\begin{align*}
\lefteqn{\E\left[V_1^a(\ln(V_1))^2\right]}\\
&=
\E\left[V_1^a\right]
\left[
\bigl(\psi(\alpha_1+a)-\psi(\alpha+a)\bigr)^2
+
\psi_1(\alpha_1+a)-\psi_1(\alpha+a)
\right],
\end{align*}
and for two distinct coordinates $i\neq j$,
\begin{align*}
\E\left[V_i^aV_j^b\ln(V_i)\ln(V_j)\right]
=
\E&\left[V_i^aV_j^b\right]
\big[
\bigl(\psi(\alpha_i+a)-\psi(\alpha+a+b)\bigr)\\
&\times\bigl(\psi(\alpha_j+b)
-\psi(\alpha+a+b)\bigr)
-\psi_1(\alpha+a+b)
\big],
\end{align*}
as well as
\[
\E[V_i^aV_j^b\ln(V_i)]
=
\E[V_i^aV_j^b]
\bigl(\psi(\alpha_i+a)-\psi(\alpha+a+b)\bigr).
\]
Substituting $\alpha_i=1/m$ and $\alpha=(m+1)/m$, we obtain
\begin{align*}
\E[V_1^2(\ln(V_1))^2]
&=
\frac{1}{2m+1}
\Biggl[
\left(
\psi\!\left(\frac{2m+1}{m}\right)-\psi\!\left(\frac{3m+1}{m}\right)
\right)^2\\[1ex]
&\hspace{5.8em}+
\psi_1\!\left(\frac{2m+1}{m}\right)-\psi_1\!\left(\frac{3m+1}{m}\right)
\Biggr],
\\[1ex]
\E[V_1V_2\ln(V_1)\ln(V_2)]
&=
\frac{1}{(m+1)(2m+1)}
\Biggl[
\left(
\psi\!\left(\frac{m+1}{m}\right)-\psi\!\left(\frac{3m+1}{m}\right)
\right)^2\\[1ex]
&\hspace{10em}-\psi_1\!\left(\frac{3m+1}{m}\right)
\Biggr],
\\[1ex]
\E[V_1\ln(V_1)\,U]
&=
\frac{1}{(m+1)(2m+1)}
\left(
\psi\!\left(\frac{m+1}{m}\right)-\psi\!\left(\frac{3m+1}{m}\right)
\right),
\\[1ex]
\E[V_1^4]
&=
\frac{1}{4m+1},
\qquad
\E[V_1^2V_2^2]
=
\frac{m+1}{(2m+1)(3m+1)(4m+1)},\\[1ex]
\E[V_1^2U]
&=
\frac{1}{(2m+1)(3m+1)},
\\[1ex]
\E[V_1^3\ln(V_1)]
&=
\frac{1}{3m+1}
\left(
\psi\!\left(\frac{3m+1}{m}\right)-\psi\!\left(\frac{4m+1}{m}\right)
\right),
\\[1ex]
\E[V_1V_2^2\ln(V_1)]
&=
\frac{1}{(2m+1)(3m+1)}
\left(
\psi\!\left(\frac{m+1}{m}\right)-\psi\!\left(\frac{4m+1}{m}\right)
\right),\\[1ex]
\E[U^2]
&=
\frac{1}{2m+1},
\quad
\E[U^3]
=
\frac{1}{3m+1},
\quad
\E[U^4]
=
\frac{1}{4m+1},
\\[1ex]
\E[V_1^2U^2]
&=
\frac{m+1}{(2m+1)(3m+1)(4m+1)},
\\[1ex]
\E[V_1U^2\ln(V_1)]
&=
\frac{1}{(2m+1)(3m+1)}
\left(
\psi\!\left(\frac{m+1}{m}\right)-\psi\!\left(\frac{4m+1}{m}\right)
\right),
\\[1ex]
\E\!\left[U^2 \ln(U)\right]&=\frac{1}{2m+1}\left(\psi\!\left(\frac{2m+1}{m}\right)-\psi\!\left(\frac{3m+1}{m}\right)\right),
\\[1ex]
\E[U^3\ln(U)]
&=
\frac{1}{3m+1}
\left(
\psi\!\left(\frac{3m+1}{m}\right)-\psi\!\left(\frac{4m+1}{m}\right)
\right).
\end{align*}
Substituting these identities into the decompositions for $\Var(S)$, $\Cov(S,U)$,
$\Var(T)$, $\Cov(S,T)$, $\Var(U^2)$, $\Cov(S,U^2)$, $\Cov(U,U^2)$, and
$\Cov(U^2,T)$ proves all stated formulas.
\end{proof}

\end{document}